\newtheorem{theorem}{Theorem}[section]
\newtheorem{proposition}[theorem]{Proposition}
\newtheorem*{theorem*}{Theorem}
\theoremstyle{definition}
\newtheorem{definition}[theorem]{Definition}
\newtheorem{remark}[theorem]{Remark}
\newcommand{\SSS}{{\mathbb S}}
\newcommand{\cF}{{\mathcal F}}
\begin{document}

\title[Desingularizing $b^m$-symplectic structures]{Desingularizing $b^m$-symplectic structures}

\author{Victor Guillemin}\address{ Victor Guillemin, Department of Mathematics, Massachussets Institute of Technology, Cambridge MA, US,
\it{e-mail: vwg@math.mit.edu}}
\author{Eva Miranda}\address{ Eva Miranda,
Departament de Matem\`{a}tiques, Universitat Polit\`{e}cnica de Catalunya and BGSMath Barcelona Graduate School of
Mathematics, Barcelona, Spain  \it{e-mail: eva.miranda@upc.edu}
 }
\author{Jonathan Weitsman} \address{Jonathan Weitsman, Department of Mathematics, Northeastern University, Boston MA,  US, \it{e-mail: j.weitsman@neu.edu}}
   \thanks{Eva Miranda is supported by the Catalan Institution for Research and Advanced Studies via ICREA Academia 2016 Prize, by the  Ministerio de Econom\'{\i}a y Competitividad project with reference MTM2015-69135-P and by the Generalitat de Catalunya project with reference 2014SGR634. Jonathan Weitsman  is supported in part by NSF grant DMS 12-11819. }

\date{\today}

\begin{abstract}
A $2n$-dimensional Poisson manifold $(M ,\Pi)$ is said to be $b^m$-symplectic if it is symplectic on the complement of a hypersurface $Z$ and has a simple
Darboux canonical form  at points of $Z$ which we will describe below. In this paper we will discuss a desingularization procedure which,
for $m$ even, converts $\Pi$ into a family of symplectic forms $\omega_{\epsilon}$ having the property that $\omega_{\epsilon}$ is equal to
the $b^m$-symplectic form dual to $\Pi$ outside an $\epsilon$-neighborhood of $Z$ and, in addition, converges to this form as $\epsilon$ tends
to zero in a sense that will be made precise in the theorem below. We will then use this construction to show that a number of somewhat
mysterious properties of $b^m$-manifolds can be more clearly understood by viewing them as limits of analogous properties of the
$\omega_{\epsilon}$'s. We will also prove versions of these results for $m$ odd;  however, in the odd case the family $\omega_{\epsilon}$ has
to be replaced by a family of  \lq\lq folded\rq\rq   symplectic forms.

\end{abstract}

\maketitle

\section{Introduction}

 A $b$-symplectic manifold is an oriented Poisson manifold $(M ,\Pi)$ which has the property that the map $\Pi^n: M \longrightarrow
 \Lambda^{2n}(TM)$
intersects the zero section of $\Lambda^{2n}(TM)$ transversally in a codimension one submanifold $Z\subset M$. For such a Poisson manifold the
dual to the bivector field $\Pi$ is a generalized De Rham form of $b$-type and defines a \lq\lq $b$-symplectic\rq\rq  structure on $M$.
{The $b$ in \lq\lq $b$-symplectic\rq\rq  comes from \emph{boundary} as these structures were initially considered by Nest and Tsygan \cite{nestandtsygan} in connection to deformation quantization of manifolds with boundary. The theory of calculus on manifolds with boundary, or $b$-calculus, as developed by Melrose \cite{melrose}\footnote{for his proof of the Atiyah-Patodi index theorem.} is used in this study. Notwithstanding, in this article the considered manifolds  are manifolds without boundary and the critical set $Z$ is an hypersurface of the manifold.}

 {$b$-Symplectic structures and their applications} have been the topic of a number of recent articles (see \cite{guimipi},\cite{guimipi2}, \cite{Gualtierili},
\cite{marcutosorno1}, \cite{frejlichmartinezmiranda}, \cite{cavalcanti}, \cite{marcutosorno2}, \cite{gmps}, \cite{gualtierietal} \cite{kms},
\cite{gmps2}) and generalizations of these structures in which one no longer requires the transversality assumption above have also been
considered.  In this paper we will be concerned with one such generalization, due to Geoffrey Scott \cite{scott} in which the transversality
assumption is replaced by the assumption that away from $Z,$ $M$ is symplectic while at $Z$ the Poisson structure has a {simple local expression}.

 { For $b^m$-symplectic structures  a Darboux normal form can be obtained as in the classical Darboux theorem for symplectic structures but replacing one of the smooth coordinates by a $b^m$-function. So locally we can write }{$$\displaystyle\frac{dx_1}{x_1^m}\wedge dy_1+\displaystyle\sum_{2}^{n} dx_i\wedge dy_i.$$}  These structures  known as $b^m$ structures  {and their \lq\lq dual" folded-symplectic structures  show up modeling problems in Celestial Mechanics as the three-body problem or their restricted versions whenever ad-hoc coordinates transformations such as McGehee coordinates or Kustaanheimo-Stiefel regularizations are considered \cite{kms,dkm}.}

 {Our goal in this paper is to  prove that such a duality exhibited in examples holds globally on the manifold. In other words, in this paper we describe a} \lq\lq
desingularization\rq\rq  of these structures: Where $m$ is even, we will construct a family of symplectic forms
on $M$, depending on a parameter $\epsilon,$ and having the property that as $\epsilon$ tends to zero these forms tend in the limit to the $b^m$
form that is the dual object to the $b^m$ Poisson bivector field $\Pi$.  Where $m$ is odd, we prove an analogous result, but with the family of
symplectic forms replaced by a family of folded symplectic forms.  More explicitly we prove (Theorems \ref{thm:desingularizingsymp} and
\ref{thm:desingularizingfolded}):
\begin{theorem*}
{ Let $\omega$ be a} $b^m$-symplectic structure  on a compact manifold $M$  and let $Z$ be its critical hypersurface.
\begin{itemize}
\item If $m$ is even, there exists  a family of symplectic forms ${\omega_{\epsilon}}$ which coincide with  the $b^{m}$-symplectic form
    $\omega$ outside an $\epsilon$-neighborhood of $Z$ and for which  the family of bivector fields $(\omega_{\epsilon})^{-1}$ converges in
    the $C^{2k-1}$-topology to the Poisson structure $\omega^{-1}$ as $\epsilon\to 0$ .
\item If $m$ is odd, there exists  a family of folded symplectic forms ${\omega_{\epsilon}}$ which coincide with  the $b^{m}$-symplectic form
    $\omega$ outside an $\epsilon$-neighborhood of $Z$.

\end{itemize}

\end{theorem*}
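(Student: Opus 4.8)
The plan is to work semilocally near $Z$ and reduce everything to a one-variable substitution. First I would invoke the structure theory of $b^m$-forms (the $b^m$-Darboux theorem together with Scott's Laurent decomposition): on a tubular neighborhood $U\cong Z\times(-\delta,\delta)$, with $t$ a defining function for $Z$ and $\pi\colon U\to Z$ the projection, the closed $b^m$-form $\omega$ admits an expansion
$$\omega=\frac{dt}{t^{m}}\wedge\pi^{*}\alpha+\sum_{j=1}^{m-1}\frac{dt}{t^{j}}\wedge\pi^{*}\alpha_{j}+\pi^{*}\beta,$$
where $\alpha$ and the $\alpha_{j}$ are closed one-forms on $Z$ and $\beta$ is a smooth two-form whose pullback restricts to a symplectic form on each symplectic leaf. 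The point is that every singular term is, modulo the smooth remainder, of the form $d\bigl(h_{j}(t)\bigr)\wedge\pi^{*}(\text{closed})$ with $h_{j}$ a primitive of $t^{-j}$, so that desingularization amounts to replacing the one-variable primitives $h_{j}$ by smooth functions.

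The core construction is the choice of these smooth replacements. For the leading term I would fix a smooth function $f_{\epsilon}\colon\RR\to\RR$ with $f_{\epsilon}(t)=\tfrac{1}{(1-m)\,t^{m-1}}$ for $|t|\geq\epsilon$, agreeing to all orders at $|t|=\epsilon$, and modified only on $(-\epsilon,\epsilon)$; similarly for the lower-order terms. I would then define $\omega_{\epsilon}$ by substituting $df_{\epsilon}$ (and its analogues) for $\tfrac{dt}{t^{m}}$ (and the $\tfrac{dt}{t^{j}}$) inside $U$, and $\omega_{\epsilon}=\omega$ outside $U$. Because the substitution is carried out at the level of exact pieces $d(\cdot)\wedge\pi^{*}(\text{closed})$, closedness of $\omega_{\epsilon}$ is immediate from $d\,\pi^{*}\alpha=0$; and since $f_{\epsilon}$ is unmodified for $|t|\geq\epsilon$, the form $\omega_{\epsilon}$ coincides with $\omega$ outside the $\epsilon$-neighborhood of $Z$, as required.

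The parity of $m$ enters exactly in the nondegeneracy analysis, and this is where the even and odd cases split. The derivative $f_{\epsilon}'(t)$ equals $t^{-m}$ for $|t|\geq\epsilon$. When $m$ is even, $t^{-m}>0$ on both sides of $Z$, so the interpolation can be chosen with $f_{\epsilon}'>0$ everywhere; the coefficient of $\omega_{\epsilon}^{n}$ along $Z$ is then nonzero and $\omega_{\epsilon}$ is genuinely symplectic. When $m$ is odd, $t^{-m}$ changes sign across $t=0$, so any smooth interpolant must have $f_{\epsilon}'$ vanishing, and arranging a transverse zero makes $\omega_{\epsilon}^{n}$ vanish transversally along a hypersurface, i.e. $\omega_{\epsilon}$ is folded symplectic. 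This dichotomy is precisely the content of the two bullet points.

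Finally, for the even case I would establish the $C^{2k-1}$-convergence of $(\omega_{\epsilon})^{-1}$ to $\omega^{-1}$. Inverting $\omega_{\epsilon}$ semilocally reduces, in the $(t,y)$-plane of the modular direction, to inverting a $2\times2$ block governed by $f_{\epsilon}'(t)$, so the bivector $(\omega_{\epsilon})^{-1}$ differs from $\omega^{-1}$ only through the replacement of $t^{-m}$-type coefficients by $f_{\epsilon}'$-type coefficients, all supported in $(-\epsilon,\epsilon)$. The estimate then comes down to controlling finitely many $t$-derivatives of $f_{\epsilon}$ and of the relevant smooth coefficients on $(-\epsilon,\epsilon)$, using that the Poisson tensor, rather than the form, is the smooth object on $Z$. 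The main obstacle, and the place requiring the most care, is arranging the one-variable interpolants $f_{\epsilon}$ so that all three requirements hold simultaneously—closedness (automatic), the correct sign or vanishing behaviour of $f_{\epsilon}'$ for nondegeneracy, and uniform control of derivatives up to order $2k-1$ for the convergence statement—rather than any one of these in isolation.
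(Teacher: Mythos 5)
Your strategy (Laurent expansion near $Z$, smooth one-variable replacements for the singular primitives, parity deciding symplectic versus folded, one-variable estimates for convergence) is the same as the paper's, but your even-case construction contains a genuine error, not a technicality. You require $f_\epsilon(t)=\frac{1}{(1-m)t^{m-1}}$ exactly for $|t|\geq\epsilon$ and $f_\epsilon'>0$ everywhere. These two demands are incompatible when $m$ is even: since $m-1$ is odd, the primitive diverges to $+\infty$ as $t\to 0^-$ and to $-\infty$ as $t\to 0^+$, so
$$f_\epsilon(-\epsilon)=\frac{1}{(m-1)\epsilon^{m-1}}>0>-\frac{1}{(m-1)\epsilon^{m-1}}=f_\epsilon(\epsilon),$$
and therefore
$$\int_{-\epsilon}^{\epsilon}f_\epsilon'(t)\,dt=f_\epsilon(\epsilon)-f_\epsilon(-\epsilon)=-\frac{2}{(m-1)\epsilon^{m-1}}<0,$$
which forces $f_\epsilon'$ to be negative on a set of positive measure, hence to vanish somewhere in $(-\epsilon,\epsilon)$. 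At such points $\omega_\epsilon^n=0$ (in $b^m$-Darboux coordinates $\omega_\epsilon^n$ is a multiple of $f_\epsilon'\,dx\wedge dy\wedge dx_2\wedge\cdots\wedge dy_n$), so your $\omega_\epsilon$ is never symplectic. Your sentence ``$t^{-m}>0$ on both sides of $Z$, so the interpolation can be chosen with $f_\epsilon'>0$ everywhere'' is exactly the step that fails: positivity of the derivative outside $(-\epsilon,\epsilon)$ is irrelevant; the obstruction is the ordering of the boundary values.

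The missing idea, which is the crux of the paper's construction, is that only $df_\epsilon$ enters $\omega_\epsilon$, so one may shift the two branches of the primitive by \emph{different constants} without affecting the agreement $\omega_\epsilon=\omega$ outside the $\epsilon$-neighborhood. The paper's $f$ equals $\frac{-1}{(2k-1)x^{2k-1}}-2$ for $x<-1$ and $\frac{-1}{(2k-1)x^{2k-1}}+2$ for $x>1$; these shifts reverse the order of the boundary values and make a globally increasing smooth gluing possible. Moreover the self-similar scaling $f_\epsilon(x)=\epsilon^{-(2k-1)}f\!\left(\frac{x}{\epsilon}\right)$ does double duty: it confines the modification to $|x|\leq\epsilon$ and yields the uniform $C^{2k-1}$ bound essentially for free, since with $g=1/f'$ one gets $\left(\epsilon^{2k}g\!\left(\frac{x}{\epsilon}\right)\right)^{(2k-1)}=\epsilon\, g^{(2k-1)}\!\left(\frac{x}{\epsilon}\right)=\mathcal{O}(\epsilon)$, matching $(x^{2k})^{(2k-1)}=(2k)!\,x=\mathcal{O}(\epsilon)$ on $[-\epsilon,\epsilon]$; your ad hoc interpolants ``agreeing to all orders at $|t|=\epsilon$'' would still need such a scaling structure for the estimate to be uniform in $\epsilon$. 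Finally, your odd-case interpolation is consistent (there the primitive is even, the boundary values match, and the sign change of $f_\epsilon'$ forces the fold), but a folded symplectic form requires, besides transverse vanishing of $\omega_\epsilon^n$ along $Z$, that $i^*\omega_\epsilon$ have maximal rank $2n-2$ on the fold; the paper verifies this from the fact that $\beta$ restricts to the leafwise symplectic forms of the foliation on $Z$, and your sketch omits this check.
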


{ One of the main concerns in Symplectic Geometry is to determine the topological constraints for a manifold to admit a symplectic structure. In this spirit, the theorems that we prove in this paper imply that any manifold admitting a $b^{2k}$-symplectic structure also admits a honest symplectic structure and therefore all the topological constraints for a manifold to be symplectic also apply for a manifold to admit a $b^{2k}$-symplectic manifold.}

 {The same reasoning applies for $b^{2k+1}$-symplectic structures and folded symplectic structures though not every folded symplectic manifold admits a $b^{2k+1}$-symplectic structure since the subclass of desingularized folded symplectic structures is strict. In particular Cannas da Silva proved that every $4$-manifold admits a folded symplectic manifold but as proved in \cite{guimipi, marcutosorno1} not every $4$-manifold admits a $b$-symplectic structure as some topological constraints apply\footnote{The same applies for $b^{2k+1}$-symplectic manifolds since the arguments in \cite{marcutosorno1} can be adapted to the  $b^{2k+1}$-case}. In particular $S^4$ does not admit a $b$-symplectic structure but it admits a folded symplectic structure.}

 Our goal in introducing these families is that in attempting to define $b^m$-{analogs} of a number of basic invariants of symplectic and folded
 symplectic manifolds such as symplectic volume and, for Hamiltonian $G$ manifolds, moment polytopes and Duistermaat-Heckman measures, one
 encounters a number of frustrating \lq\lq infinities\rq\rq that are hard to interpret or eliminate {as it was already seen for instance in \cite{gmps, gmps2}}. However, we believe (and will verify below
 in a couple of important cases) that desingularization is an effective tool for getting around this problem.

{\textbf{Acknowledgements:} We are indebted to the three referees of this paper for suggesting improvements. We are also grateful to Arnau Planas for helping us with the pictures in this paper.}
\section{Preliminaries}

Let $M$ be a compact manifold and let $Z\subset M$ a hypersurface in $M$.
In \cite{guimipi} a  $b$-symplectic form was defined {as} a $2$-form  in the complex of $b$-forms. In order to define this complex we first
considered the $b$-tangent bundle $^b T(M)$ (whose sections are defined as vector fields tangent to the critical hypersurface $Z$)  and its dual
$^b T^*(M)$. The complex of $b$-forms was introduced \`{a} la De Rham as sections of the bundles $\Lambda^{k} ({^b} T^*(M))$.

In \cite{scott} a similar description was obtained for $b^m$-symplectic forms. Let $M$ be a compact oriented manifold and let $Z \subset M$ be a
hypersurface in $M,$ along with a choice of function $x \in C^\infty(M)$ such that $0$ is a regular value of $x$ and $x^{-1}(0) = Z.$
Given such a triple $(M,Z,x),$  the fibers of the $b^m$-(co)tangent bundle are given by
\begin{align*} {^{b^m}}T_pM &\cong \left\{ \begin{array}{c l}T_pZ + < x^m\frac{\partial}{\partial x} >  & \textrm{if} \ p \in Z  \\T_pM &
\textrm{if} \ p \notin Z
\end{array}\right.\\ {^{b^m}}T_p^*M &\cong
\left\{ \begin{array}{c l}T_p^*Z + < \frac{dx}{x^m} > & \textrm{if} \ p \in Z  \\T_p^*M & \textrm{if} \ p \notin Z \end{array}\right.
\end{align*}

 \noindent As in the case of $b$-manifolds, these fibres combine to form a bundle; a $b^m$-manifold is a triple $(M,Z,x),$ along with these
 bundles.\footnote{By abuse of notation, we denote a $b^m$-manifold by $(M,Z),$ suppressing the function $x.$  Note that Scott \cite{scott}'s
 definition of a $b^m$-manifold differs from ours by allowing {\em local} defining functions for $Z.$}

We then define

 \begin{definition} A {\bf symplectic $b^m$-manifold}  is a $b^m$-manifold $(M, Z)$ with a closed $b^m$-two form $ \omega $  which has maximal
 rank  at every $p \in M$.
\end{definition}

To describe the properties of such forms we will need the following definitions and propositions (see  {Section 4} in \cite{scott}).

\begin{definition} A {\bf Laurent Series} of a closed $b^m$-form $\omega$ is a decomposition of $\omega$ in a tubular neighborhood $U$ of $Z$
of the form
\begin{equation}\label{eqn:laurent}
\displaystyle\omega = \frac{dx}{x^m} \wedge \left(\sum_{i = 0}^{m-1}\pi^*(\hat{\alpha_{i}})x^i\right) + \beta
\end{equation}

\noindent where $\pi: U \to Z$ is the projection, where each $\hat{\alpha_{i}}$ is
a closed smooth De Rham form on $Z,$ and $\beta$ is a De Rham form on {$U$}. \end{definition}

Proposition 3.3 and Remark 3.4 in \cite{scott} yield,
\begin{proposition}[Scott]\label{prop:laurent}
In a { tubular} neighborhood of $Z$, every   closed $b^m$-form  $\omega$ can be written in a  Laurent form of type (\ref{eqn:laurent}) { and the restriction of $\sum_{i = 0}^{m-1}\pi^*(\hat{\alpha_{i}})x^i$ and $\beta$ to $Z$ are well-defined closed $1$ and $2$-forms respectively. }
\end{proposition}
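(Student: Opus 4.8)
The plan is to establish the Laurent decomposition by induction on $m$, peeling off one singular coefficient at a time. Since $\omega$ is a closed $b^m$-form, near $Z$ I first exploit the local normal form for the $b^m$-cotangent bundle: any section of $\Lambda^2({}^{b^m}T^*M)$ can be written as $\frac{dx}{x^m}\wedge \mu + \gamma$ where $\mu$ is a $1$-form and $\gamma$ a $2$-form, both ordinary De Rham objects defined on the tubular neighborhood $U$. The content of the proposition is that, after imposing $d\omega = 0$, the singular part $\frac{dx}{x^m}\wedge\mu$ can be arranged so that $\mu = \sum_{i=0}^{m-1}\pi^*(\hat\alpha_i)x^i$ with each $\hat\alpha_i$ closed on $Z$, absorbing all higher-order (in $x$) and non-pulled-back contributions into the smooth remainder $\beta$.

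First I would expand $\mu$ in a Taylor series in the normal coordinate $x$ about $Z$, writing $\mu = \sum_{i=0}^{m-1} \mu_i x^i + x^m \tilde\mu$ where the $\mu_i$ are $1$-forms on $U$ and $\tilde\mu$ is smooth. The tail term contributes $\frac{dx}{x^m}\wedge x^m\tilde\mu = dx\wedge\tilde\mu$, which is a genuine smooth De Rham $2$-form and so belongs in $\beta$. For each remaining $i<m$ I would decompose $\mu_i$ into its component along $dx$ and its component tangent to the fibres of $\pi$; the $dx$-component wedges to zero against $\frac{dx}{x^m}$, so only the horizontal part survives, and I may replace $\mu_i$ by its restriction to $Z$ pulled back via $\pi$, since the difference is $O(x)$ and can be shifted into higher-order terms and ultimately into $\beta$. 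This produces the desired form $\frac{dx}{x^m}\wedge\left(\sum_{i=0}^{m-1}\pi^*(\hat\alpha_i)x^i\right)+\beta$.

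The substantive step is to extract the closedness of the $\hat\alpha_i$ and of $\beta|_Z$ from $d\omega=0$. Here I would compute $d\omega$ in the chosen decomposition and sort terms by their order in $x$ and by whether they contain $dx$. Differentiating $\frac{dx}{x^m}\wedge\pi^*(\hat\alpha_i)x^i$ yields $\frac{dx}{x^m}\wedge x^i\, d_Z(\pi^*\hat\alpha_i)$ (the $dx\wedge dx$ terms vanishing), and collecting the coefficient of $\frac{dx}{x^m}\wedge x^i$ for each $i$ forces $d\hat\alpha_i=0$ on $Z$. The leftover constraint, coming from the terms not proportional to $\frac{dx}{x^m}$, gives $d\beta=0$ to the relevant order, hence $d(\beta|_Z)=0$ after restriction. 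That the sum $\sum_i \pi^*(\hat\alpha_i)x^i$ restricted to $Z$ (namely $\hat\alpha_0$) is a well-defined closed $1$-form then follows, and $\beta|_Z$ is a well-defined closed $2$-form.

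The main obstacle I anticipate is the well-definedness claim: the Taylor coefficients $\mu_i$ and the splitting into horizontal and $dx$ parts depend a priori on the choice of tubular neighborhood, the projection $\pi$, and the defining function $x$, yet the proposition asserts the restrictions to $Z$ are intrinsic. To handle this I would argue that although the individual higher coefficients $\hat\alpha_i$ for $i\geq 1$ and the interior form $\beta$ may shift under such choices, their restrictions to $Z$ are pinned down because on $Z$ itself the normal form for ${}^{b^m}T^*_pM$ singles out $\frac{dx}{x^m}$ and $T^*_pZ$ canonically; any two admissible decompositions differ by terms vanishing on $Z$. This is essentially the argument underlying Scott's Proposition 3.3 and Remark 3.4, which I am permitted to invoke, so the cleanest route is to reduce the invariance statement to those cited results rather than reprove the coordinate-independence from scratch.
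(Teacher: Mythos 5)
The first thing to note is that the paper contains no proof of this proposition at all: its entire ``proof'' is the sentence citing Scott's Proposition 3.3 and Remark 3.4, which is why the statement is attributed to Scott. Your proposal therefore does strictly more than the paper, and the mathematics in it is sound: the local splitting $\omega = \frac{dx}{x^m}\wedge\mu + \gamma$, the Taylor expansion of $\mu$ in the defining function with the $x^m$-tail and all $dx$-components absorbed into $\beta$ (note $\frac{dx}{x^m}\wedge x^m\tilde\mu = dx\wedge\tilde\mu$ is smooth), and the pole-order bookkeeping in $d\omega = 0$ that forces each $d\hat{\alpha}_i = 0$ and then $d\beta = 0$ --- this is in substance exactly the argument behind the result in the cited source, so your reconstruction is faithful to it. Two refinements would make it airtight. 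First, the ``collect coefficients'' step needs the small observation that a finite sum $\sum_i x^{i-m}\, dx\wedge\pi^*(d\hat{\alpha}_i)$ can equal the smooth form $-d\beta$ only if every coefficient vanishes; this follows by multiplying by $x^m$ and evaluating successive $x$-derivatives along $Z$, using that pullbacks from $Z$ carry no $dx$ and no $x$-dependence. Second, and more importantly, your closing move --- invoking ``Scott's Proposition 3.3 and Remark 3.4'' for the well-definedness claim --- is, in the context of this paper, an appeal to the very statement being proved, since that citation \emph{is} the paper's whole proof; to avoid circularity you should promote the intrinsic argument you yourself sketch: at $p \in Z$ the splitting $\Lambda^2\left({}^{b^m}T_p^*M\right) = \left(\frac{dx}{x^m}\wedge T_p^*Z\right)\oplus\Lambda^2 T_p^*Z$ is canonical, and any two Laurent presentations of the same $\omega$ differ by terms of the form $x^m(\cdots)$ and $dx\wedge(\cdots)$, both of which pull back to zero under $i: Z\hookrightarrow M$, so $\hat{\alpha}_0$ and $i^*\beta$ depend only on $\omega$ and not on the choices made. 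With that step made explicit, your argument is a complete, self-contained proof of a statement the paper merely outsources.
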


\subsection{ Symplectic foliations and normal forms for $b^m$-symplectic manifolds}

We begin by studying the symplectic foliation of the Poisson structure induced by  a $b^m$-symplectic form on the critical hypersurface $Z$.

{\begin{proposition}\label{prop:foliation} Given a  $b^m$-symplectic manifold with $b^m$ symplectic form $\omega,$ the closed one-form $\hat{\alpha_0}$
in the Laurent decomposition
\[ \omega = \frac{dx}{x^m}\wedge(\sum_{i = 0}^{m-1}  \pi^*(\hat{\alpha_{i}})x^i) + \beta \]
defines the codimension-one symplectic foliation $\mathcal F$ of the regular Poisson structure induced  on
the critical hypersurface $Z$. The symplectic form on each leaf is given by the restriction of  the $2$-form $\beta$.  In addition one can find  a Poisson vector field $v$ on $Z$ transverse to this foliation.
\end{proposition}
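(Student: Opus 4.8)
The plan is to reduce everything to a computation in Scott's Darboux normal form and then globalize using the fact (Proposition \ref{prop:laurent}) that $\hat{\alpha_0}$ and the restriction $\beta|_Z$ are well-defined closed forms on $Z$. First I would pass to the dual bivector field $\Pi = \omega^{-1}$. Although $\omega$ is a genuine $b^m$-form, the bundle isomorphism ${}^{b^m}TM \to {}^{b^m}T^*M$ it induces inverts to a \emph{smooth} bivector $\Pi$ on $M$: in Darboux coordinates $(x, t, x_2, y_2, \dots, x_n, y_n)$ in which $\omega = \frac{dx}{x^m}\wedge dt + \sum_{i=2}^n dx_i\wedge dy_i$, one has $\Pi = x^m \frac{\partial}{\partial x}\wedge\frac{\partial}{\partial t} + \sum_{i=2}^n \frac{\partial}{\partial x_i}\wedge\frac{\partial}{\partial y_i}$. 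Restricting to $Z=\{x=0\}$ kills the first term, so $\Pi|_Z$ is an honest smooth bivector of constant rank $2n-2$, i.e.\ a regular Poisson structure of corank one on the $(2n-1)$-dimensional manifold $Z$. In these coordinates $\hat{\alpha_0} = dt$ and $\beta = \sum_{i=2}^n dx_i\wedge dy_i$, so the three conclusions are transparent locally; the work is to make them invariant.

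Second, I would record the nondegeneracy content of the maximal-rank hypothesis. Expanding $\omega^n$ from the Laurent form (using $\frac{dx}{x^m}\wedge\frac{dx}{x^m}=0$), its leading ($\frac{dx}{x^m}$-)coefficient restricted to $Z$ is $\hat{\alpha_0}\wedge(\beta|_Z)^{n-1}$, and maximality of the rank of $\omega$ forces this $(2n-1)$-form to be nowhere vanishing. In particular $\hat{\alpha_0}$ is a nowhere-vanishing closed $1$-form on $Z$, so by Frobenius its kernel is an integrable codimension-one distribution defining a foliation $\mathcal{F}$, and $(\hat{\alpha_0}, \beta|_Z)$ is a cosymplectic pair in the sense that $\hat{\alpha_0}\wedge(\beta|_Z)^{n-1}$ is a volume form.

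Third, I would identify $\mathcal{F}$ with the symplectic foliation of $\Pi|_Z$. The tangent distribution to the symplectic foliation is the image of the anchor $\Pi^\sharp\colon T^*Z\to TZ$, whose annihilator is $\ker\Pi^\sharp$; since $\Pi^\sharp$ is skew-adjoint one has $\operatorname{Im}\Pi^\sharp = \operatorname{Ann}(\ker\Pi^\sharp)$. A direct check — immediate in the normal form, where $\Pi^\sharp(dt)=0$ — shows $\Pi^\sharp(\hat{\alpha_0})=0$, so $\hat{\alpha_0}$ spans the rank-one bundle $\ker\Pi^\sharp$ and $\operatorname{Im}\Pi^\sharp = \ker\hat{\alpha_0} = T\mathcal{F}$. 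Restricting $\Pi|_Z$ to a leaf $L$ inverts to the leafwise symplectic form, which the normal form identifies with $\iota_L^*\beta$. For the transverse Poisson vector field I would take the Reeb vector field $v$ of the cosymplectic structure $(\hat{\alpha_0}, \beta|_Z)$, defined by $\iota_v\hat{\alpha_0} = 1$ and $\iota_v(\beta|_Z) = 0$; it is transverse to $\mathcal{F}$ by the first condition, and Cartan's formula gives $\mathcal{L}_v\hat{\alpha_0} = 0$ and $\mathcal{L}_v(\beta|_Z) = 0$ (both forms being closed), so $v$ preserves the data determining $\Pi|_Z$ and hence satisfies $\mathcal{L}_v\Pi = 0$.

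The main obstacle is not any single computation — each is trivial in Darboux coordinates — but rather the invariant bookkeeping: verifying that passing to $\Pi$ and restricting to $Z$ yields a smooth corank-one Poisson structure independent of the auxiliary data (the defining function $x$, the tubular neighborhood, and the splitting in the Laurent form), and that $\hat{\alpha_0}$ and $\beta|_Z$ enter canonically. This is where Proposition \ref{prop:laurent} does the heavy lifting, guaranteeing that $\hat{\alpha_0}$ and $\beta|_Z$ are well-defined closed forms on $Z$; the remaining care is to confirm that the leafwise symplectic form and the Reeb field depend only on these canonical objects and not on the chosen normal-form chart.
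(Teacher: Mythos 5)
Your proof is correct and follows essentially the same route as the paper, which simply defers to the $m=1$ argument of Section 4.1 of \cite{guimipi2}: dualize to the induced regular corank-one Poisson structure on $Z$, identify its symplectic foliation with $\ker\hat{\alpha_0}$ and the leafwise symplectic forms with the restrictions of $\beta$, and take as transverse Poisson vector field the Reeb field of the cosymplectic pair $(\hat{\alpha_0},\beta|_Z)$ --- precisely the field with $\iota_v\hat{\alpha_0}=1$, $\iota_v\beta=0$ described in the paper's subsequent Remark. Your auxiliary use of the $b^m$-Darboux theorem as a computational device is harmless rather than circular, since in this paper that theorem rests only on Proposition \ref{prop:thetamu} and the Moser theorem (Theorem \ref{theorem:moserbn}), neither of which invokes the present proposition.
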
}

See { section 4.1} in \cite{guimipi2} for the proof of this in the $m=1$ case.
{For $m>1$ the proof is essentially the same and it only requires the identification of the symplectic foliation associated to the dual Poisson structure.}
\begin{remark}
 { The vector field $v$ is the  unique vector field  satisfying the equations
$$\begin{cases}{\begin{array}{ccl}\iota_v\hat{\alpha_0}&=&1 \\ \iota_v\beta&=&0\end{array}}\end{cases}$$  \noindent where $\hat{\alpha_0}$ is the defining
one-form for the foliation $\mathcal F$ and $\beta$ a closed $2$-form on $Z$ that restricts to the symplectic form on every leaf of $\mathcal
F$.}
\end{remark}

{\begin{remark}
As in \cite{guimipi} the pair of  forms $(\hat{\alpha_0}, \beta)$ defines a cosymplectic structure on $Z$.
\end{remark}}

%
%
%
%

 By Theorem 13 in \cite{guimipi} one gets,

\begin{theorem}\label{globalcompact} If $\cF$ contains a compact leaf $L$, then every leaf of $\cF$ is diffeomorphic to $L$,  and  $Z$ is
the total space of a fibration $f:M\to\SSS^1$ with fiber $L$, and $\cF$ is the fiber foliation $\{f^{-1}(\theta)|\theta\in\SSS^1\}$.
\end{theorem}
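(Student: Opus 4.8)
The plan is to observe first that the entire argument is governed by Poisson-geometric data on $Z$ that does not record the integer $m$, and then to run a transverse-flow (mapping-torus) argument. By Proposition \ref{prop:foliation} the foliation $\cF$ is the kernel of the closed, nowhere-vanishing one-form $\hat{\alpha_0}$, the leafwise symplectic form is the restriction of the closed two-form $\beta$, and the transverse Poisson vector field $v$ is characterized by $\iota_v\hat{\alpha_0}=1$ and $\iota_v\beta=0$. Since none of $\hat{\alpha_0}$, $\beta$, $v$ depends on $m$, the situation is formally identical to the $b$-case $m=1$ treated in \cite{guimipi}, so it suffices to reprove the statement from these data. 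A first computation, using Cartan's formula together with $d\hat{\alpha_0}=0$ and $d\beta=0$, gives $\mathcal L_v\hat{\alpha_0}=d(\iota_v\hat{\alpha_0})=0$ and $\mathcal L_v\beta=0$. Because $Z$ is compact the flow $\phi_t$ of $v$ is complete, and since it preserves $\hat{\alpha_0}$ it preserves $\cF$; since it also preserves $\beta$ it carries each leaf to a leaf by a leafwise symplectomorphism.

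Next I would analyze the compact leaf $L$. Any loop $\gamma\subset L$ satisfies $\int_\gamma\hat{\alpha_0}=0$, because $\hat{\alpha_0}$ vanishes on $T\cF\supset TL$; as the transverse coordinate to $\cF$ is a local primitive of $\hat{\alpha_0}$, this forces the holonomy of $L$ to be trivial. By Reeb's local stability theorem $L$ then has a saturated tubular neighborhood $L\times(-\varepsilon,\varepsilon)$ in which every leaf is compact and diffeomorphic to $L$, and in which $\hat{\alpha_0}$ pulls back to $ds$ for the transverse coordinate $s$. The flow realizes this neighborhood as $\{\phi_s(L)\}_{|s|<\varepsilon}$, with $v(s)=\iota_v\hat{\alpha_0}=1$, so $\phi$ moves transversally at unit $\hat{\alpha_0}$-speed; in particular $\phi_t(L)\neq L$ for $0<|t|<\varepsilon$.

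I would then show that the transverse flow is periodic on $L$, i.e. that $T_L=\{t:\phi_t(L)=L\}$ is a nontrivial discrete subgroup $T_0\mathbb{Z}$ of $\RR$. It is proper, since transversality of $v$ rules out $T_L=\RR$, and discrete near $0$ by the product structure just described; the essential point is that it is nonzero, equivalently that the group of periods of $\hat{\alpha_0}$ is discrete. This is the Tischler-type heart of the argument: using compactness of $Z$, the invariance of $\hat{\alpha_0}$ under $\phi_t$ (a transverse invariant measure) prevents the pairwise-disjoint compact leaves $\phi_t(L)$ from piling up without a return, forcing some $\phi_{T_0}(L)=L$ with $T_0>0$ minimal. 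This step is the main obstacle; everything before and after it is formal.

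Finally, granted the minimal return time $T_0$, the map $L\times\RR\to Z$, $(p,t)\mapsto\phi_t(p)$, descends to a diffeomorphism from the mapping torus of the leafwise symplectomorphism $\phi_{T_0}|_L\colon L\to L$ onto $Z$ (surjectivity follows since $\bigcup_t\phi_t(L)$ is then the compact, open, hence full image, and injectivity from the definition of $T_L$). The induced projection $f\colon Z\to\RR/T_0\mathbb{Z}\cong\SSS^1$ is therefore a fiber bundle with fiber $L$ whose fibers are precisely the leaves $\phi_t(L)$ of $\cF$. In particular every leaf is diffeomorphic to $L$ and $\cF=\{f^{-1}(\theta)\}_{\theta\in\SSS^1}$, which are the three assertions.
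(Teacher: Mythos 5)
The paper itself does not prove Theorem \ref{globalcompact}: it quotes it as Theorem 13 of \cite{guimipi}, and the proof there is precisely the construction you outline, namely realizing $Z$ as the mapping torus of the first-return map of the flow of the transverse Poisson vector field $v$ (this is exactly what the paper's subsequent paragraph, citing Corollary 4 of \cite{guimipi}, records). So your route is the intended one, and most of your steps are correct and complete: $\mathcal{L}_v\hat{\alpha_0}=\mathcal{L}_v\beta=0$ by Cartan's formula, completeness of the flow on the compact hypersurface $Z$, leaves carried to leaves, triviality of the holonomy of $L$ (the foliation is defined by a closed nonvanishing $1$-form), the saturated product neighborhood giving discreteness of $T_L=\{t:\phi_t(L)=L\}$ at $0$, and the descent to the mapping torus once a minimal period $T_0>0$ is known.

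The one genuine gap is the step you yourself flag: the proof that $T_L\neq\{0\}$. ``The invariant transverse measure prevents the disjoint compact leaves from piling up without a return'' is a slogan, not an argument: as written, nothing excludes that the pairwise disjoint leaves $\phi_t(L)$ accumulate on a leaf that is not of the form $\phi_t(L)$ and never return to $L$. Note also that your surjectivity argument cannot be invoked here, since it presupposes $T_0$ (only then is $\bigcup_t\phi_t(L)=\Phi(L\times[0,T_0])$ compact). The step can be closed with ingredients you already have, in either of two ways. (i) Volume growth: if $T_L=\{0\}$, then $\Phi:L\times\RR\to Z$, $(p,t)\mapsto\phi_t(p)$, is an injective local diffeomorphism (two leaves $\phi_t(L)$, $\phi_s(L)$ that meet coincide, forcing $t=s$); since $\phi_t$ preserves $\hat{\alpha_0}$ and $\beta$, and $\iota_v\hat{\alpha_0}=1$, $\iota_v\beta=0$, one computes $\Phi^*(\hat{\alpha_0}\wedge\beta^{n-1})=dt\wedge (i_L^*\beta)^{n-1}$, so the pairwise disjoint sets $\Phi(L\times[k,k+1))$, $k\in\mathbb{Z}$, all have the same positive volume with respect to the volume form $\hat{\alpha_0}\wedge\beta^{n-1}$ of Proposition \ref{prop:thetamu} (positive because $i_L^*\beta$ is symplectic on the compact leaf $L$), contradicting finiteness of the volume of the compact manifold $Z$. (ii) Open--closed: show directly that the image of $\Phi$ is closed --- if $q\in\overline{\Phi(L\times\RR)}$, take a flow box $\{\phi_s(y):y\in D\subset L_q,\ |s|<\delta\}$ around $q$; any image point $\phi_t(x)$ in this box equals some $\phi_s(y)$, so $y=\phi_{t-s}(x)$ and hence $L_q=\phi_{t-s}(L)\ni q$; then the image is open, closed and nonempty, hence all of the connected manifold $Z$, and $\Phi$ would be a bijective local diffeomorphism $L\times\RR\to Z$, impossible since $Z$ is compact. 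Either patch makes your proof complete. (A minor point: the statement's $f:M\to\SSS^1$ is a typo for $f:Z\to\SSS^1$, as you implicitly assumed.)
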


As it was proved in Corollary 4 in \cite{guimipi}  if  the manifold $Z$ is compact then it is the mapping torus of the map $\phi:L\to L$  given
by the holonomy map of the fibration over $\SSS^1$, $\frac{L\times \left[0,1\right]}{ (x,0)\sim (\phi(x),1)}$ with $\phi$ is  the first return map of $\exp tv$ and $v$ the vector field above.

\subsection{ $b^m$-versions of the Moser and Darboux theorems} For $m=1$ the statement and proof of these results can be found in
\cite{guimipi2}.  The proof in \cite{guimipi2} is based on  the Moser path method for $b$-symplectic structures; however, the Moser path method
also works for $b^m$-symplectic structures (see Theorems 5.2 and 5.3 in  \cite{scott}), so the results apply to $ b^m-$symplectic manifolds as well. The first of these
theorems asserts

\begin{theorem}\label{theorem:moserbn} If $\omega_0, \omega_1$ are symplectic $b^m$-forms on $(M, Z)$ with $Z$ compact and
$\omega_0\vert_{{Z}} = \omega_1\vert_{Z}$, then there are neighborhoods $U_0, U_1$ of $Z$ and a $b^m$-symplectomorphism $\varphi: (U_0, Z,
\omega_0) \rightarrow (U_1,Z,  \omega_1)$ such that $\varphi\vert_{Z} = Id.$ \end{theorem}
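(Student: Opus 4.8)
The plan is to run the relative Moser path method inside the $b^m$-de Rham complex. First I would connect the two forms by the linear path $\omega_t = (1-t)\omega_0 + t\omega_1$, $t\in[0,1]$, each member of which is a closed $b^m$-two-form. The hypothesis $\omega_0|_Z = \omega_1|_Z$ gives $\omega_t|_Z = \omega_0|_Z$ for every $t$, so each $\omega_t$ has maximal rank at the points of $Z$; since maximal rank is an open condition and $Z\times[0,1]$ is compact, there is a single tubular neighborhood $U$ of $Z$ on which all the $\omega_t$ are genuine $b^m$-symplectic forms. This is the one place where agreement on $Z$ is used, namely to keep the entire family nondegenerate.

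The heart of the argument is to write $\eta := \omega_1-\omega_0$ as $d\mu$ for a $b^m$-one-form $\mu$ that vanishes along $Z$. Writing $\omega_0$ and $\omega_1$ in Laurent form (Proposition \ref{prop:laurent}) on $U \cong Z\times(-\epsilon,\epsilon)$, the equality of the restrictions to $Z$ forces the leading one-form $\hat\alpha_0$ and the restriction $\beta|_Z$ of the two forms to coincide, so that $\eta$ carries no order-zero singular term and its smooth part restricts to zero on $Z$. The remaining singular terms of $\eta$, of the form $\frac{dx}{x^m}\wedge\pi^*(\gamma_i)\,x^i$ with $\gamma_i$ closed one-forms on $Z$, are $b^m$-exact with primitives vanishing on $Z$ — of the shape $h_i\,x^i\,\frac{dx}{x^m}$ with $dh_i=\gamma_i$ — while the purely smooth part of $\eta$, being closed and vanishing on $Z$, is handled by the ordinary relative Poincaré lemma on the product neighborhood, producing a De Rham one-form $\nu$ with $\nu|_Z = 0$. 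Assembling these contributions yields the desired $\mu$.

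Finally I would solve the Moser equation. Since each $\omega_t$ is $b^m$-nondegenerate on $U$, the musical isomorphism ${}^{b^m}T^*U \to {}^{b^m}TU$ it defines is a bundle isomorphism, so there is a unique time-dependent $b^m$-vector field $X_t$ with $\iota_{X_t}\omega_t = -\mu$; because $\mu$ vanishes on $Z$, so does $X_t$. Integrating $X_t$ (shrinking $U$ and using compactness of $Z$ so that the flow is defined up to time one) produces a $b^m$-diffeomorphism $\varphi := \varphi_1 : U_0\to U_1$, and the standard computation $\frac{d}{dt}\varphi_t^*\omega_t = \varphi_t^*(\mathcal{L}_{X_t}\omega_t + \eta) = \varphi_t^*(d\,\iota_{X_t}\omega_t + \eta) = \varphi_t^*(-\eta+\eta) = 0$ shows $\varphi^*\omega_1 = \omega_0$; the vanishing $X_t|_Z = 0$ gives $\varphi|_Z = \mathrm{Id}$.

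I expect the main obstacle to be precisely the second step: the relative $b^m$-Poincaré lemma, namely producing a $b^m$-primitive of $\eta$ that genuinely vanishes along $Z$. The delicacy is that pointwise agreement on $Z$ controls only the coefficient $\hat\alpha_0$ together with $\beta|_Z$, so one must verify that the higher Laurent coefficients of the difference either cancel or contribute only $b^m$-exact terms whose primitives vanish on $Z$ — equivalently, that the well-defined Laurent data of $\omega_0$ and $\omega_1$ along $Z$ match to the order needed. Keeping the homotopy operator inside the $b^m$-complex despite the $x^{-m}$ singularities, and guaranteeing that the resulting $\mu$ vanishes on $Z$ so that the Moser flow fixes $Z$ pointwise, is the technical core of the proof.
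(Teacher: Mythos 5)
Your overall strategy (linear path, a primitive of $\eta=\omega_1-\omega_0$ vanishing along $Z$, then the Moser flow) is the same one the paper relies on: the paper gives no new argument for $m>1$, it cites the $m=1$ proof (Theorem 6.5 of \cite{guimipi2}) and asserts the path method extends. Your steps 1 and 3 are sound. The gap is exactly at the step you yourself flag as the technical core, and your proposed resolution of it is incorrect: the primitives you write down for the higher singular terms do not exist in general. A $b^m$-one-form near $Z$ is $\mu=g\,\frac{dx}{x^m}+\nu$ with $g$ smooth and $\nu$ a smooth one-form, and since $\frac{dx}{x^m}$ is closed,
\begin{equation*}
d\mu \;=\; -\,\frac{dx}{x^m}\wedge d_Zg \;+\; d\nu,
\end{equation*}
where $d_Zg$ denotes the part of $dg$ without $dx$. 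Equating this to $x^i\,\frac{dx}{x^m}\wedge\pi^*\gamma_i$ and Taylor-expanding $g=\sum_l g_l\,x^l$ forces $d_Zg_l=0$ for $l<m$, $l\neq i$, and $\gamma_i=-dg_i$. Hence for $1\le i\le m-1$ the form $x^i\,\frac{dx}{x^m}\wedge\pi^*\gamma_i$ is $b^m$-exact \emph{if and only if} $[\gamma_i]=0\in H^1(Z)$, which is precisely what your ansatz $h_i\,x^i\,\frac{dx}{x^m}$ with $dh_i=\gamma_i$ presupposes. But the hypothesis $\omega_0|_Z=\omega_1|_Z$ gives no control whatsoever on $[\gamma_i]$ for $i\ge 1$: restriction to $Z$ (as a section of $\Lambda^2({}^{b^m}T^*M)|_Z$) annihilates every Laurent term carrying a positive power of $x$, so it pins down only $\hat\alpha_0$ and $i^*\beta$. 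The classes $[\gamma_1],\dots,[\gamma_{m-1}]$ are part of the $(H^1(Z))^m$-summand in the Mazzeo--Melrose-type description of ${}^{b^m}H^2$ of a tubular neighborhood, and they are genuine obstructions to the exactness of $\eta$.

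Moreover, this gap cannot be closed under the hypotheses as stated, so it is not a matter of finding a cleverer primitive. Take $m=2$, $M=S^1\times(-1,1)$, $Z=S^1\times\{0\}$, $\omega_0=\frac{dx}{x^2}\wedge d\theta$ and $\omega_1=(1+x)\,\frac{dx}{x^2}\wedge d\theta$: both are $b^2$-symplectic, $\omega_0|_Z=\omega_1|_Z$, yet $\eta=\frac{dx}{x}\wedge d\theta$ is not $b^2$-exact on any neighborhood of $Z$, so no Moser primitive exists. Worse, writing any diffeomorphism germ fixing $Z$ pointwise as $X=xu(x,\theta)$, $\Theta=\theta+x\psi_1(\theta)+O(x^2)$, a direct computation shows the coefficient of $\frac{dx}{x}\wedge d\theta$ in $\varphi^*\omega_0$ is $\bigl(\psi_1/u_0\bigr)'$ with $u_0=u(0,\cdot)$, an exact form on $S^1$, whereas equality with $\omega_1$ would require it to be $d\theta$; so these two germs are not symplectomorphic relative to $Z$ at all. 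In other words, for $m\ge 2$ the conclusion requires the additional hypothesis that the cohomology classes of all higher Laurent coefficients of $\omega_0$ and $\omega_1$ agree --- exactly the cohomological hypotheses appearing in Theorems 5.2--5.3 of \cite{scott}, which the paper's remark asserts are unnecessary; your analysis, pushed to its conclusion, shows they are not. Under that strengthened hypothesis (equivalently, reading $\omega_0|_Z=\omega_1|_Z$ as equality of the full Laurent data along $Z$), your $\eta$ becomes a smooth closed form with $i^*\eta=0$, the ordinary relative Poincar\'e lemma for the retraction of $U$ onto $Z$ produces a smooth primitive vanishing on $Z$, and your steps 1 and 3 then complete the proof exactly as in the $m=1$ case of \cite{guimipi2}.
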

(For the proof see \cite{guimipi2}, Theorem 6.5).
Consider  now the decomposition of a $b^m$ form
\begin{equation}\label{eq:splittingbnsymplecticform}\omega=\alpha\wedge\frac{dx}{x^m}+\beta, \text{ with } \alpha\in\Omega^1(M) \text{ and }
\beta\in\Omega^2(M).
\end{equation}

\begin{remark} {Observe that this theorem being of semilocal flavor holds in a neighborhood of a compact submanifold, independently of the compactness of the ambient manifold. Also the Moser theorem in \cite{scott} is global and the semilocal applications of it (cf. Theorem 5.2 in \cite{scott}) contain unnecessary hypotheses on the cohomology classes of the $b^m$-symplectic form (compare with Theorem 34 in \cite{guimipi}). }

 \end{remark}
 To prove the $b^m$-version of the Darboux theorem we will need,

\begin{proposition}\label{prop:thetamu}(See Proposition 10 in \cite{guimipi})
Let $\tilde{\alpha}=i^*\alpha$ and $\tilde{\beta}=i^*\beta$, where $i:Z\hookrightarrow M$ denotes the inclusion.  Then
the forms $\tilde{\alpha}$ and $\tilde{\beta}$ are closed. Furthermore,
 \begin{enumerate}
\item[a)] The form $\tilde{\alpha}$ is nowhere vanishing and intrinsically defined in the sense that it does not depend on the splitting
    (\ref{eq:splittingbnsymplecticform}). In particular, the codimension-one foliation of $Z$ defined by $\tilde{\alpha}$ is intrinsically
    defined.
\item[b)] For each leaf $L\stackrel{i_L}\hookrightarrow Z$ of this foliation, the form $i^*_L\tilde{\beta}$ is intrinsically defined, and is a
    symplectic form on $L$.
\item[c)]In (\ref{eq:splittingbnsymplecticform}) we can assume without loss of generality that:
\begin{itemize}
\item The forms $\alpha$ and $\beta$ are closed.
\item The form $\alpha\wedge\beta^{n-1}\wedge d{x}$ is nowhere vanishing.
\item And, in particular, the form $i^*(\alpha\wedge\beta^{n-1})$ is nowhere vanishing.
\end{itemize}
\end{enumerate}
\end{proposition}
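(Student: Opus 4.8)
The plan is to reduce everything to the Laurent decomposition of Proposition \ref{prop:laurent} and to read off the claims from the maximal-rank condition on $\omega$. Writing $\omega=\frac{dx}{x^m}\wedge(\sum_{i=0}^{m-1}\pi^*(\hat\alpha_i)x^i)+\beta$ near $Z$ and comparing with the splitting (\ref{eq:splittingbnsymplecticform}), I would first record that $\tilde\alpha=i^*\alpha$ agrees up to sign with $\hat\alpha_0$ and that $\tilde\beta=i^*\beta$ is the restriction of the smooth part; both are closed because each $\hat\alpha_i$ is a closed De Rham form on $Z$ and because $i^*\beta$ is the well-defined closed $2$-form furnished by Proposition \ref{prop:laurent}. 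Independently of a particular splitting, this also follows by expanding $d\omega=d\alpha\wedge\frac{dx}{x^m}+d\beta=0$ and matching the singular and smooth parts: both $d\alpha$ and $d\beta$ turn out to be divisible by $dx$, so $i^*d\alpha=i^*d\beta=0$ and the pullbacks are closed.

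For parts (a) and (b) I would expand $\omega^n=n\,\alpha\wedge\frac{dx}{x^m}\wedge\beta^{n-1}+\beta^n$ and observe that maximality of the rank of the $b^m$-form $\omega$ forces the leading coefficient $\alpha\wedge\beta^{n-1}\wedge dx$ to be nonvanishing along $Z$; pulling back gives $\tilde\alpha\wedge\tilde\beta^{n-1}\neq 0$ on $Z$. This immediately yields that $\tilde\alpha$ is nowhere zero, hence defines the codimension-one foliation of Proposition \ref{prop:foliation}, and that on a leaf $L$ of dimension $2n-2$ the top power $(i_L^*\tilde\beta)^{n-1}$ is nonzero, so $i_L^*\tilde\beta$ is symplectic. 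Intrinsicality I would check by comparing two splittings $\alpha\wedge\frac{dx}{x^m}+\beta=\alpha'\wedge\frac{dx}{x^m}+\beta'$: then $(\alpha-\alpha')\wedge dx=x^m(\beta'-\beta)$ is smooth and divisible by $dx$, which forces the horizontal part of $\alpha-\alpha'$ to vanish to order $m$ along $Z$ and forces $\beta'-\beta$ to be a multiple of $dx$. Applying $i^*$ and using $i^*dx=0$ gives $i^*\alpha=i^*\alpha'$ and $i^*\beta=i^*\beta'$, so $\tilde\alpha$ and $i_L^*\tilde\beta$ are independent of the choice.

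The delicate point is part (c), where $m>1$ genuinely differs from the $b$-symplectic case. In the Laurent splitting one computes that $\alpha=-\sum_i\pi^*(\hat\alpha_i)x^i$ satisfies $d\alpha=\sum_i i\,x^{i-1}\,\pi^*(\hat\alpha_i)\wedge dx$: this is divisible by $dx$, so $\beta$ is automatically closed, but $\alpha$ is closed only modulo the higher coefficients $\hat\alpha_1,\dots,\hat\alpha_{m-1}$, and these cannot simply be absorbed into $\beta$ without introducing poles of order up to $m-1$. The remedy I propose is to pass to the model $\omega_0=-\pi^*(\hat\alpha_0)\wedge\frac{dx}{x^m}+\beta$, which is manifestly a closed $b^m$-symplectic form with both $\alpha_0=-\pi^*\hat\alpha_0$ and $\beta$ closed, and which satisfies $\omega_0\vert_Z=\omega\vert_Z$ because at $Z$ each term $x^i\frac{dx}{x^m}$ with $i\ge 1$ restricts to zero so that only $\hat\alpha_0$ and $i^*\beta$ survive. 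By the $b^m$-Moser theorem (Theorem \ref{theorem:moserbn}) $\omega$ and $\omega_0$ are $b^m$-symplectomorphic near $Z$, so for local purposes we may take $\omega=\omega_0$; this is exactly the meaning of ``without loss of generality''. The nondegeneracy statement, that $\alpha\wedge\beta^{n-1}\wedge dx$ and hence $i^*(\alpha\wedge\beta^{n-1})$ are nowhere vanishing, then follows from the maximal-rank computation of $\omega_0^n$ already used in (a). I expect this Moser reduction to be the main obstacle, since it is the one step that genuinely requires the multi-term Laurent expansion together with the semilocal uniqueness theorem, rather than a direct pointwise manipulation.
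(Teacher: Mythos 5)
Your handling of the closedness of $\tilde{\alpha}$ and $\tilde{\beta}$, of parts (a) and (b), and of the independence of the splitting is correct, and it is essentially the computation behind Proposition 10 of \cite{guimipi}, which is all the paper itself offers (the paper gives no proof beyond the citation and the remark that the $m=1$ argument adapts because $\tilde{\alpha}=\tilde{\alpha}_0$). You also correctly identified the point that remark glosses over: for $m>1$ the substitution $\alpha\mapsto\pi^*\tilde{\alpha}$ used in the $b$-case produces poles of order $m-1$, so part (c) cannot be obtained by the $m=1$ manipulation.

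However, your repair of part (c) --- truncating the Laurent series to $\omega_0=\frac{dx}{x^m}\wedge\pi^*\hat{\alpha}_0+\beta$ and invoking Theorem \ref{theorem:moserbn} --- has a genuine gap, and it is exactly at the step you flagged as the main obstacle. A Moser path argument between $\omega$ and $\omega_0$ needs $\omega-\omega_0$ to be \emph{exact} in the $b^m$-complex near $Z$, and the pointwise condition $\omega|_Z=\omega_0|_Z$ that you verify (each term $x^i\frac{dx}{x^m}$, $i\geq 1$, vanishing at $Z$ as a section) does not give this when $m\geq 2$: one has $\omega-\omega_0=\sum_{i=1}^{m-1}\frac{dx}{x^{m-i}}\wedge\pi^*\hat{\alpha}_i$, whose class in the $b^m$-cohomology of a tubular neighborhood is the tuple $([\hat{\alpha}_1],\dots,[\hat{\alpha}_{m-1}])\in H^1(Z)^{m-1}$, which need not vanish. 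Concretely, take $m=2$, $U=S^1\times(-1,1)$, $Z=\{x=0\}$, and $\omega=\left(\frac{1}{x^2}+\frac{1}{x}\right)dx\wedge d\theta$, so that $\hat{\alpha}_0=\hat{\alpha}_1=d\theta$ and $\omega_0=\frac{dx}{x^2}\wedge d\theta$. No diffeomorphism $\varphi$ between neighborhoods of $Z$ with $\varphi|_Z=\mathrm{Id}$ can satisfy $\varphi^*\omega=\omega_0$, because $\int_{\{\epsilon<|x|<1/2\}}\omega$ contains a $\log(1/\epsilon)$ term while the corresponding integral of $\omega_0$ does not, and this asymptotic is a diffeomorphism invariant; so the symplectomorphism you want simply does not exist, even though the hypothesis of Theorem \ref{theorem:moserbn} as literally stated is satisfied (this shows that for $m\geq 2$ the cohomological hypotheses in Scott's semilocal Moser theorem are necessary rather than removable). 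Worse, the failure is not specific to your method: Taylor-expanding $\alpha$ in a splitting (\ref{eq:splittingbnsymplecticform}) and imposing $d\alpha=0$ forces every higher Laurent coefficient $\hat{\alpha}_i$, $1\leq i\leq m-1$, to be \emph{exact} on $Z$, so in the example above no closed splitting exists at all, with respect to $x$ or to any other defining function. What survives of (c) is the local statement (on a ball all closed forms are exact, which is all the Darboux theorem needs) and the two nondegeneracy bullets, which follow from the maximal-rank computation applied to the Laurent splitting without any closedness of $\alpha$, exactly as in your proof of (a) and (b). A final, smaller caveat: even where Moser does apply, $\varphi$ need not preserve $x$, so pulling back the splitting of $\omega_0$ yields a decomposition with respect to $d(x\circ\varphi)/(x\circ\varphi)^m$ rather than a splitting of the given form (\ref{eq:splittingbnsymplecticform}).
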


{ The same proof can be adapted for $m>1$ since $\tilde{\alpha}=\tilde{\alpha_0}$.}
We will now show

\begin{theorem}[$b^m$-Darboux theorem]\label{theorem:Darbouxbn}
Let $\omega$ be a $b^m$-symplectic form on $(M,Z)$ and $p\in Z$. Then we can find a coordinate chart $(U,x_1,y_1,\ldots,x_n,y_n)$ centered at
$p$ such that on $U$ the hypersurface $Z$ is locally defined by $x_1=0$ and
$$\omega=\frac{d x_1}{x_1^m}\wedge d y_1+\sum_{i=2}^n d x_i\wedge d y_i.$$
\end{theorem}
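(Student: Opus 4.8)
The plan is to reduce the statement to the semilocal uniqueness furnished by the $b^m$-Moser theorem (Theorem \ref{theorem:moserbn}) by exhibiting an explicit Darboux model whose induced data on $Z$ agrees with that of $\omega$. Writing $\omega=\alpha\wedge\frac{dx}{x^m}+\beta$ as in (\ref{eq:splittingbnsymplecticform}), recall from Proposition \ref{prop:thetamu} (together with the remark that $\tilde\alpha=\hat\alpha_0$) that $\tilde\alpha=i^*\alpha$ is closed and nowhere vanishing, defining the foliation $\cF$ of $Z$, while $\tilde\beta=i^*\beta$ is closed and restricts to a symplectic form on each leaf. The key observation is that, by Proposition \ref{prop:laurent}, the only data of the $b^m$-form $\omega$ that survive restriction to $Z$ (where $x=0$ kills all higher Laurent terms) are precisely the pair $(\tilde\alpha,\tilde\beta)$, i.e. the cosymplectic structure on $Z$. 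Hence two $b^m$-forms near $p$ inducing the same pair $(\tilde\alpha,\tilde\beta)$ automatically have the same restriction to $Z$ in the sense required by Theorem \ref{theorem:moserbn}.

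First I would put the cosymplectic pair into normal form near $p$. Since $\tilde\alpha$ is closed and nonvanishing, the Poincar\'e lemma gives a function $y_1$ with $\tilde\alpha=dy_1$ on a neighborhood of $p$ in $Z$, so the leaves of $\cF$ are the level sets of $y_1$. On the leaf $L_0$ through $p$ the form $i_{L_0}^*\tilde\beta$ is symplectic, so the ordinary Darboux theorem gives coordinates $(x_2,y_2,\ldots,x_n,y_n)$ on $L_0$ with $i_{L_0}^*\tilde\beta=\sum_{i=2}^n dx_i\wedge dy_i$. To spread these over the nearby leaves I would use the transverse Poisson vector field $v$ of Proposition \ref{prop:foliation}, characterized by $\iota_v\tilde\alpha=1$, $\iota_v\tilde\beta=0$; since $\tilde\beta$ is closed, Cartan's formula gives $\cL_v\tilde\beta=0$, so transporting the leafwise Darboux coordinates along the flow of $v$ produces functions $(y_1,x_2,\ldots,x_n,y_n)$ on a neighborhood of $p$ in $Z$ realizing $\tilde\alpha=dy_1$ and $\tilde\beta=\sum_{i=2}^n dx_i\wedge dy_i$ simultaneously. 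This is the Darboux theorem for cosymplectic manifolds and is the step carrying the real content: one must check that the flow of $v$ preserves both the foliation and the leafwise symplectic form so that the two normalizations are compatible.

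Next I would build the model. Taking $x_1=x$ as defining function and extending $(y_1,x_2,\ldots,y_n)$ off $Z$ through a collar $U\cong Z\times(-\epsilon,\epsilon)$ (constant in the $x_1$-direction), set
\[
\omega_0=\frac{dx_1}{x_1^m}\wedge dy_1+\sum_{i=2}^n dx_i\wedge dy_i .
\]
This is manifestly a closed $b^m$-form of maximal rank, hence $b^m$-symplectic, and by construction its induced pair on $Z$ is $\bigl(dy_1,\sum_{i\ge 2}dx_i\wedge dy_i\bigr)=(\tilde\alpha,\tilde\beta)$, absorbing an irrelevant sign into the choice of $y_1$. Thus $\omega_0\vert_Z=\omega\vert_Z$, and the Moser path method underlying Theorem \ref{theorem:moserbn}, applied in the chart $U$ (which requires no global compactness), yields a $b^m$-symplectomorphism $\varphi$ with $\varphi\vert_Z=\mathrm{Id}$ and $\varphi^*\omega=\omega_0$. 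Transporting the model coordinates by $\varphi$ gives the asserted chart centered at $p$.

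The subtlety I would flag is that for $m>1$ the form $\omega$ generally has nonzero higher Laurent coefficients $\hat\alpha_1,\ldots,\hat\alpha_{m-1}$ while the model $\omega_0$ has none; these are invisible to the restriction to $Z$, so they do not obstruct the hypothesis of Theorem \ref{theorem:moserbn}, and they are simply absorbed by the Moser isotopy, which may move points off $Z$ while fixing $Z$ pointwise. Consequently the genuine difficulty is concentrated in the cosymplectic normalization of $(\tilde\alpha,\tilde\beta)$ described above, everything else being a direct application of the Moser theorem already available to us.
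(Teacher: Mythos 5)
Your proposal is correct and follows essentially the same route as the paper: split $\omega=\alpha\wedge\frac{dx_1}{x_1^m}+\beta$, use Proposition \ref{prop:thetamu} to normalize the induced data $(\tilde\alpha,\tilde\beta)$ on $Z$ near $p$ so that $\omega|_Z$ agrees with the standard model's restriction, and conclude by the $b^m$-Moser theorem (Theorem \ref{theorem:moserbn}). The only difference is one of detail: the paper compresses the cosymplectic normalization into the phrase \lq\lq thus we can assume,\rq\rq\ whereas you spell it out via the Poincar\'e lemma, leafwise Darboux, and transport along the transverse Poisson vector field $v$ --- a correct filling-in of exactly that step.
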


\begin{proof} Write $\omega=\alpha\wedge \frac{d x_1}{x_1^m}+\beta$, and $\tilde{\alpha}=i^*\alpha$ and $\tilde{\beta}=i^*\beta$, with
$i:Z\hookrightarrow M$ the inclusion. From Proposition \ref{prop:thetamu}, for all $p\in Z,$ we have $\tilde{\alpha}_p$ non-vanishing. Thus
$\tilde{\alpha}_p\wedge\tilde{\beta}_p\neq0$ and $\tilde{\beta}_p\in\Lambda^2( T_p^*Z)$ has rank $n-1$.
Thus we can assume
$$\omega|_Z=(\frac{ d x_1}{x_1^m}\wedge d y_1+\sum_{i=2}^n d x_i\wedge d y_i)|_Z.$$ \noindent and the assertion above follows from
 Theorem \ref{theorem:moserbn}.
\end{proof}

\section{Desingularizing $b^{2k}$-symplectic structures}

Consider a manifold $M$ equipped with a $b^{2k}$-symplectic structure given by a $b^{2k}$-symplectic form $\omega.$ In view of the Laurent
decomposition given in Proposition \ref{prop:laurent}, we have in a tubular neighborhood $U$ of $Z$

\begin{equation}\label{00} \omega = \frac{dx}{x^{2k}}\wedge(\sum_{i = 0}^{2k-1}  \alpha_{i}x^i) + \beta \end{equation}

\noindent where $\alpha_i=\pi^*(\widehat{\alpha_i})$ with $\widehat{\alpha_i}$ closed one-forms on $Z$ and $\pi :U \to Z$ denoting the
projection.

Let $f\in \mathcal{C}^{\infty}(\mathbb R)$ be an odd smooth function satisfying $f'(x)>0$ for all $x \in [-1,1]$ as shown below,

\begin{figure}[!htb]
\centering
\includegraphics[scale=.6]{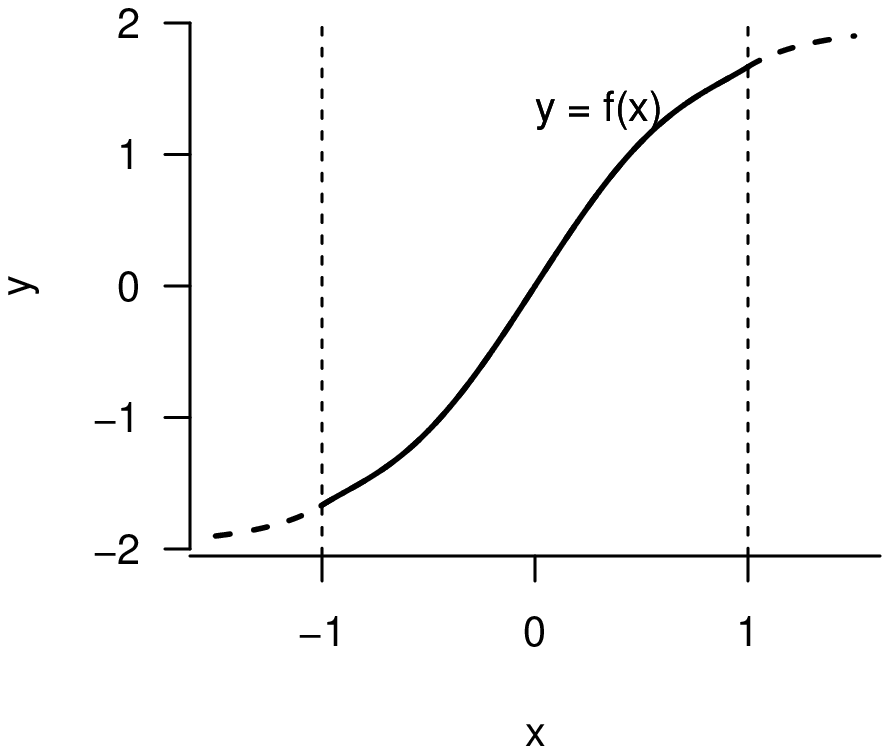}
\label{fig:digraph}
\end{figure}
\noindent and satisfying

\[f(x)=\begin{cases} \frac{-1}{(2k-1) x^{2k-1}}-2 &\textrm{for} \quad x<-1  \\ \frac{-1}{(2k-1) x^{2k-1}}+2 &\textrm{for} \quad x>1
\end{cases}\]

\noindent outside the interval $[-1,1]$.

Now we scale the function $f$ to construct a new function \begin{equation}\label{definingequation}f_\epsilon(x):=  \frac{1}{\epsilon^{2k-1}}
f\!\left(\frac{x}{\epsilon}\right).\end{equation} Thus outside the interval $[-\epsilon,\epsilon]$ ,

\[f_\epsilon(x)=\begin{cases} \frac{-1}{(2k-1) x^{2k-1}}-\frac{2}{\epsilon^{2k-1}} &\textrm{for} \quad x<-\epsilon  \\ \frac{-1}{
(2k-1)x^{2k-1}}+\frac{2}{\epsilon^{2k-1}}&\textrm{for} \quad x>\epsilon
\end{cases}\]
 We replace $\frac{dx}{x^{2k}}$ by $df_\epsilon$  in the expansion (\ref{00}) an $\epsilon$-neighborhood and obtain a differential form

\[ {\omega_\epsilon} = df_\epsilon\wedge(\sum_{i = 0}^{2k-1} \alpha_{i}x^i) + \beta \]

\noindent Since $\omega_\epsilon$ agrees with $\omega$ outside an $\epsilon$ neighborhood of $Z,$ it extends to a differential form on all of
$M.$
We denote this extension (by abuse of notation) by ${\omega_\epsilon},$ the \textbf{$f_\epsilon$-desingularization}\footnote{{which we can also refer to as {\bf
deblogging} since it eliminates the $b/log$-singularity} } of the  $b^{2k}$-symplectic structure $\omega.$

\begin{theorem}\label{thm:desingularizingsymp}
The  \textbf{$f_\epsilon$-desingularization} ${\omega_{\epsilon}}$ is symplectic. The family ${\omega_{\epsilon}}$  coincides with  the
$b^{2k}$-symplectic form $\omega$ outside an $\epsilon$-neighborhood.  The family of bivector fields $\omega_{\epsilon}^{-1}$ converges to the
Poisson structure $\omega^{-1}$  in the $C^{2k-1}$-topology as $\epsilon\to 0$ .
\end{theorem}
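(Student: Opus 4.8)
The plan is to treat the three assertions in order, with the convergence of the dual bivectors as the only substantial point. First I would dispense with the claim that $\omega_\epsilon$ agrees with $\omega$ outside the $\epsilon$-tube: differentiating the explicit formula for $f_\epsilon$ gives $f_\epsilon'(x) = x^{-2k}$ for $|x| > \epsilon$, hence $df_\epsilon = \frac{dx}{x^{2k}}$ there, and the two forms literally coincide. For closedness I would first record that $\omega$ closed forces $d\beta = 0$: since each $\alpha_i = \pi^*\widehat{\alpha_i}$ is closed and $\frac{dx}{x^{2k}}$ is a closed $b^{2k}$-form, the term $\frac{dx}{x^{2k}}\wedge\sum_i\alpha_i x^i$ is closed (the only candidate surviving piece of its differential is proportional to $\frac{dx}{x^{2k}}\wedge dx\wedge(\cdots) = 0$), so $0 = d\omega = d\beta$. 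The same computation with $df_\epsilon = f_\epsilon'(x)\,dx$ in place of $\frac{dx}{x^{2k}}$ gives $d\omega_\epsilon = -df_\epsilon\wedge d(\sum_i\alpha_i x^i) + d\beta = 0$, since $dx\wedge dx = 0$ kills the first term and $d\beta = 0$.

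Then comes non-degeneracy. Because $df_\epsilon\wedge(\sum_i\alpha_i x^i)$ is a decomposable $2$-form it squares to zero, so expanding yields $\omega_\epsilon^n = \beta^n + n\,f_\epsilon'(x)\,dx\wedge(\sum_i\alpha_i x^i)\wedge\beta^{n-1}$. I would factor out $f_\epsilon'(x) > 0$ and observe that the remaining top form is $n\,dx\wedge(\sum_i\alpha_i x^i)\wedge\beta^{n-1} + \beta^n/f_\epsilon'(x)$, whose leading piece $dx\wedge\alpha\wedge\beta^{n-1}$ is nowhere vanishing near $Z$ by Proposition \ref{prop:thetamu}(c); since $f_\epsilon'\gtrsim\epsilon^{-2k}$ blows up while $\beta^n$ stays bounded, the sum is nonzero throughout the tube for $\epsilon$ small, and outside the tube $\omega_\epsilon = \omega$ is already non-degenerate. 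Hence $\omega_\epsilon$ is symplectic.

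For the convergence I would first note that $\omega_\epsilon^{-1} - \omega^{-1}$ vanishes off the tube, so everything is local near $Z$. The key observation is that, writing $\alpha = \sum_i\alpha_i x^i$, both forms have the shape $\Omega(c) = \beta + c\,(dx\wedge\alpha)$ with $c = f_\epsilon'(x)$ for $\omega_\epsilon$ and $c = x^{-2k}$ for $\omega$, the perturbation $dx\wedge\alpha$ being a fixed, $\epsilon$-independent, \emph{rank-two} form. Exploiting this low-rank structure — in the Darboux chart of Theorem \ref{theorem:Darbouxbn} one reads off $P = \partial_{x_1}\wedge\partial_{y_1}$ and $Q = \sum_{i\ge2}\partial_{x_i}\wedge\partial_{y_i}$, while in general $A = Q$, $B = P$ solve the index equations $MA = 0$, $\beta B = 0$, $\beta A + MB = \mathrm{Id}$ with $M = dx\wedge\alpha$ — I would show that the dual bivector is exactly affine in $1/c$, namely $\Pi(c) = Q + \tfrac1c P$, with $P,Q$ smooth $\epsilon$-independent bivector fields and $P$ spanning $\ker\beta$. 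Consequently $\omega_\epsilon^{-1} - \omega^{-1} = \phi_\epsilon(x)\,P$ with $\phi_\epsilon(x) = \frac{1}{f_\epsilon'(x)} - x^{2k}$.

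It then remains to estimate the scalar $\phi_\epsilon$. It is identically zero off the tube, while on $[-\epsilon,\epsilon]$ the scaling $f_\epsilon'(x) = \epsilon^{-2k}f'(x/\epsilon)$ gives $\phi_\epsilon(x) = \epsilon^{2k}h(x/\epsilon)$ with $h(u) = \frac{1}{f'(u)} - u^{2k}$ a fixed smooth function on $[-1,1]$ that is flat at $u = \pm1$ (so $\phi_\epsilon$ is genuinely smooth). Differentiating, $\phi_\epsilon^{(j)}(x) = \epsilon^{2k-j}h^{(j)}(x/\epsilon)$, whose sup-norm is $O(\epsilon^{2k-j})$; this tends to $0$ precisely for $j \le 2k-1$ and is $O(1)$ at $j = 2k$, which yields $C^{2k-1}$ convergence and pins the threshold. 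I expect the main obstacle to be the structural claim of the previous paragraph, namely isolating the \emph{entire} $\epsilon$-dependence of the inverse into the single scalar factor $1/f_\epsilon'(x)$; this requires that $\beta$ have constant rank $2n-2$ near $Z$ with $dx\wedge\alpha$ non-degenerate on $\ker\beta$, which is exactly the content of Proposition \ref{prop:thetamu}(c), and everything after it is routine one-variable calculus.
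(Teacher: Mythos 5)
Your handling of the first two assertions is essentially the paper's own argument: agreement outside the tube from $f_\epsilon'(x)=x^{-2k}$ for $|x|>\epsilon$, closedness from closedness of the $\alpha_i$ and $\beta$, and nondegeneracy by expanding $\omega_\epsilon^n=\beta^n+n\,df_\epsilon\wedge\alpha\wedge\beta^{n-1}$ (the paper writes this as $\epsilon^{-2k}\bigl(f'(x/\epsilon)\,dx\wedge\alpha_0\wedge\beta^{n-1}+\mathcal{O}(\epsilon)\bigr)$ on $|x|<\epsilon$) and invoking Proposition \ref{prop:thetamu}. Your final one-variable estimate is also the paper's reduction, done more completely: the paper, working in Darboux coordinates, shows $\epsilon^{2k}g(x/\epsilon)\to x^{2k}$ in $C^{2k-1}$ by checking only the $(2k-1)$st derivative, whereas your $\phi_\epsilon^{(j)}(x)=\epsilon^{2k-j}h^{(j)}(x/\epsilon)=\mathcal{O}(\epsilon^{2k-j})$ handles every $j\le 2k-1$ at once, with the correct observation that $h$ vanishes identically for $|u|\ge 1$.

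The gap is precisely at the step you single out as the crux. Proposition \ref{prop:thetamu}(c) does \emph{not} say that $\beta$ has constant rank $2n-2$ near $Z$: it gives $\alpha\wedge\beta^{n-1}\wedge dx\neq 0$, i.e.\ a lower bound $\mathrm{rank}\,\beta\ge 2n-2$ (and, \emph{when} the rank equals $2n-2$, the transversality $\ker\beta\cap\ker(dx\wedge\alpha)=0$ that you need), but nothing forces $\beta^n=0$ away from $Z$; indeed the Laurent $\beta$ generally contains a $dx\wedge\gamma$ term, cf.\ the expansion (\ref{eqn:eq1}). Concretely, $\omega=\frac{dx}{x^{2k}}\wedge dy+\bigl(dx\wedge dy+\sum_{i\ge 2}dx_i\wedge dy_i\bigr)$ is a legitimate Laurent decomposition with $\beta$ nondegenerate, and there
\begin{equation*}
\omega_\epsilon^{-1}-\omega^{-1}=\Bigl[\bigl(f_\epsilon'+1\bigr)^{-1}-\bigl(x^{-2k}+1\bigr)^{-1}\Bigr]\,\partial_x\wedge\partial_y ,
\end{equation*}
which is not $\bigl(1/f_\epsilon'-x^{2k}\bigr)P$ for any fixed $P$: already at $x=0$ the coefficient is $\epsilon^{2k}/\bigl(f'(0)+\epsilon^{2k}\bigr)$, not $\epsilon^{2k}/f'(0)$. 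So the "affine in $1/c$" identity with $\epsilon$-independent $P,Q$ is false in general, and the whole $\epsilon$-dependence does not factor through your single scalar. The repair is the route you mention only in passing, and it is exactly the paper's proof: pass to $b^{2k}$-Darboux coordinates (Theorem \ref{theorem:Darbouxbn}), where $\beta=\sum_{i\ge2}dx_i\wedge dy_i$ genuinely has rank $2n-2$, read off $\omega_\epsilon^{-1}-\omega^{-1}=\phi_\epsilon(x)\,\partial_{x_1}\wedge\partial_{y_1}$, apply your scalar estimate, and use compactness of $Z$ to patch finitely many charts into $C^{2k-1}$ convergence. (Both you and the paper take for granted that the desingularization of $\omega$, read in a Darboux chart, is the desingularization of the Darboux normal form, so you are no worse off on that point.)
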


As a consequence of this theorem we obtain,

\begin{theorem}
A manifold admitting a  $b^{2k}$-symplectic structure also admits a symplectic structure.
\end{theorem}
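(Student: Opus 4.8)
The plan is to deduce this statement directly from Theorem \ref{thm:desingularizingsymp}, which does all the real work. Given a manifold $M$ carrying a $b^{2k}$-symplectic form $\omega$ with critical hypersurface $Z$, that theorem produces, for every $\epsilon>0$, a global two-form $\omega_\epsilon$ on $M$ which is genuinely symplectic. Thus I would simply fix one value of $\epsilon$, say $\epsilon=1$, and exhibit $\omega_1$ as the desired symplectic structure on $M$; the limiting behavior as $\epsilon\to 0$ plays no role here, since any single member of the family already suffices.

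To see that $\omega_\epsilon$ is an honest symplectic form, recall that by construction it equals $\omega$ outside the $\epsilon$-neighborhood of $Z$, where $\omega$ is already a smooth nondegenerate closed two-form, while inside the neighborhood it is obtained from the Laurent expansion (\ref{00}) by replacing $\frac{dx}{x^{2k}}$ with $df_\epsilon$. Because these two prescriptions agree on the overlap, they glue to a well-defined smooth form $\omega_\epsilon\in\Omega^2(M)$, and closedness is immediate from $d(df_\epsilon)=0$. Nondegeneracy everywhere is precisely the content of Theorem \ref{thm:desingularizingsymp}; in particular $\omega_\epsilon^n$ is nowhere vanishing, so $\omega_\epsilon$ defines a symplectic structure and $M$ is symplectic.

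For this corollary there is essentially no obstacle beyond invoking the preceding theorem; the substantive point hides inside that theorem, namely the verification that $\omega_\epsilon$ remains nondegenerate along $Z$. There the monotonicity condition $f'(x)>0$ guarantees that $df_\epsilon=f_\epsilon'(x)\,dx$ is nonzero on the neighborhood, so that $df_\epsilon\wedge\alpha_0+\beta$ behaves like a genuine symplectic form, and nondegeneracy on $Z$ reduces to the fact that $i^*(\alpha\wedge\beta^{n-1})=\widehat{\alpha_0}\wedge\beta^{n-1}$ is nowhere vanishing, which is guaranteed by Proposition \ref{prop:thetamu}(c). I would also note that no compactness hypothesis on $M$ is needed, since the desingularization is a purely local modification near $Z$ and leaves $\omega$ untouched elsewhere.
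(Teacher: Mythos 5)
Your proposal is correct and follows exactly the paper's route: the theorem is stated there as an immediate consequence of Theorem \ref{thm:desingularizingsymp}, with any single member of the family $\omega_\epsilon$ serving as the desired symplectic structure. One small correction: you cannot fix $\epsilon=1$ arbitrarily, since the desingularization is only defined when the $\epsilon$-neighborhood of $Z$ fits inside the tubular neighborhood $U$ of the Laurent expansion, and the nondegeneracy along $Z$ in the proof of Theorem \ref{thm:desingularizingsymp} (beating the $\mathcal{O}(\epsilon)$ error term against $f'\!\left(\frac{x}{\epsilon}\right)dx\wedge\alpha_0\wedge\beta^{n-1}$) is only established for $\epsilon$ sufficiently small, so you should instead fix one such sufficiently small $\epsilon$.
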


In particular the topological constraints that apply for symplectic structures also apply for $b^{2k}$-symplectic structures.

This point of view in the study of $b^m$-symplectic forms yields several consequences. In this paper we concentrate on a couple of them
concerning volume forms and Hamiltonian actions.

We now prove Theorem \ref{thm:desingularizingsymp}.

\begin{proof} Clearly for all $\epsilon$,  the form ${\omega_\epsilon} = df_\epsilon\wedge(\sum_{i = 0}^{2k-1}  \alpha_{i}x^i) + \beta$ is
closed since all the one forms $\alpha_i$  and the $2$-form $\beta$ are closed.

Let us check that it is symplectic.  Outside $U,$ $\omega_\epsilon$ coincides with $\omega.$  In $U$ but away from $Z$, $$\omega_\epsilon^n =
\frac{df_\epsilon}{dx}{ x^{2k}} \omega^n$$ which is nowhere vanishing.  To check that $\omega_\epsilon$ is symplectic at $Z,$ observe that
$${\omega_\epsilon}= df_{\epsilon}\wedge (\sum_{i=0}^{2k-1} x^i\alpha_i)+\beta=\epsilon^{-2k} f^{\prime}\!\left(\frac{x}{\epsilon}\right) dx \wedge
(\sum_{i=0}^{2k-1} x^i \alpha_i)+\beta$$

\noindent which on the interval
 $\vert x\vert <\epsilon$ is equal to
 {$\epsilon^{-2k}( f^{\prime}\!\left(\frac{x}{\epsilon}\right) dx \wedge \alpha_0  +\mathcal{O}(\epsilon))+ \beta$}
\noindent and hence  {$${\omega_\epsilon}^n=\epsilon^{-2k}(f^{\prime}\!\left(\frac{x}{\epsilon}\right) dx \wedge
\alpha_0\wedge\beta^{n-1}+\mathcal{O}(\epsilon))$$  which is non-vanishing for $\epsilon$ sufficiently small.
  because $dx\wedge\alpha_0\wedge \beta^{n-1}\neq 0$  by Proposition \ref{prop:thetamu} and $f^{\prime}\neq0$ by definition of $f$. So we conclude that $\omega_{\epsilon}$ is symplectic.}
%

Let us now prove that the family of bivector fields $\omega_{\epsilon}^{-1}$ converges to $\omega^{-1}$  when $\epsilon\to 0$ in the
$C^{2k-1}$-topology.

Consider the form $\omega$ and the family $\omega_{\epsilon}$. Then  in $b^{2k}$-Darboux coordinates (Theorem \ref{theorem:Darbouxbn}),
$$\omega_{\epsilon}=\epsilon^{-2k}f'\!\left(\frac{x}{\epsilon}\right) dx\wedge dy+ dx_2\wedge dy_2+\dots +dx_n\wedge dy_n$$
and $$\omega=\frac{1}{x^{2k}}dx\wedge dy+ dx_2\wedge dy_2+\dots +dx_n\wedge dy_n$$

We wish to verify that the family $\omega_{\epsilon}^{-1}$ of bivector fields  given by
\begin{equation}\label{eqn:deblogbivector} \omega_{\epsilon}^{-1}=\epsilon^{2k}g\!\left(\frac{x}{\epsilon}\right) \frac{\partial}{\partial x}\wedge
\frac{\partial}{\partial y}+  \frac{\partial}{\partial x_2}\wedge \frac{\partial}{\partial y_2}+\dots + \frac{\partial}{\partial x_n}\wedge
\frac{\partial}{\partial y_n}\end{equation}
\noindent where $g(x)=\frac{1}{f'(x)}$,
converges to \begin{equation}\omega^{-1}=\label{eqn:bivector}x^{2k}\frac{\partial}{\partial x}\wedge \frac{\partial}{\partial y}+
\frac{\partial}{\partial x_2}\wedge \frac{\partial}{\partial y_2}+\dots + \frac{\partial}{\partial x_n}\wedge \frac{\partial}{\partial
y_n}\end{equation}
\noindent as $\epsilon$ tends to zero.

{The bivector field $\omega_{\epsilon}^{-1}$ converges to the bivector field $\omega^{-1}$ in the $C^{2k-1}$ topology if $\epsilon^{2k} g\left(\frac{x}{\epsilon}\right)$  converges to $x^{2k}$ in the $C^{2k-1}$ topology. Since  $\epsilon^{2k} g\left(\frac{x}{\epsilon}\right)$  is identically equal to $x^{2k}$ for $\vert x\vert >\epsilon$, it suffices to check that on the interval $[-\epsilon, \epsilon]$, the $(2k-1)^{th}$ derivative of  $\epsilon^{2k} g\left(\frac{x}{\epsilon}\right)$  converges to $(2k)! x$ in the uniform norm. }

{ On $[-\epsilon,\epsilon]$ the function $(2k)! x$ is bounded by a constant multiple of $\epsilon$. On the other hand,
$$\displaystyle \left( \epsilon^{2k}g\left(\frac{x}{\epsilon}\right)\right)^{(2k-1)}= \epsilon g^{(2k-1)}\left(\frac{x}{\epsilon}\right)$$
\noindent with $g^{(2k-1)}(t)$ a smooth function on $-1<t<1$. Since both $(2k)! x$ and $\left( \epsilon^{2k}g\left(\frac{x}{\epsilon}\right)\right)^{2k-1}$
are bounded by constant multiples of $\epsilon$ on $[-\epsilon, \epsilon]$, so is their difference, which gives us the desired convergence.}

%
%

\end{proof}

\section{ Desingularization and volume formulae}

\subsection{ Volume formulae for $b^{2k}$-symplectic manifolds} We recall from section 5.1 in \cite{scott} the following construction which
relates the volume with the Laurent decomposition of a $b^m$-symplectic structure.

 On a tubular neighborhood {$\mathcal{U}=Z\times (-1, 1)$}, $\omega=\frac{dx}{x^{2k}}\wedge (\sum_{i=0}^{2k-1} x^i\alpha_i) +\beta$.  Hence for
 $\mathcal{U}_{\epsilon}=Z\times (-\epsilon, \epsilon)$, the symplectic volume of $M\setminus \mathcal{U}_\epsilon$ is, {up to a bounded
 term pertaining to the volume of $M\setminus \mathcal{U}$}, given by
\begin{equation}\label{eqn:volume1}
\sum_{i=0}^{2k-1}\int_{\mathcal{U}-\mathcal{U}_{\epsilon}}\frac{dx}{x^{2k-i}}\wedge \alpha_i \wedge\beta^{n-1}
\end{equation}

Furthermore,\begin{equation}\label{eqn:eq1}
\beta= dx\wedge \gamma+ \sum_{j=0}^{2k-1}x^j\pi^*(\beta_j) +\mathcal{O}(x^{2k})
\end{equation}
where $\beta_j$ are $2$-forms on $Z$. { For the sake of simplicity, from now own we will denote $\pi^*(\beta_j)$ as $\beta_j$}. Plugging equation (\ref{eqn:eq1}) into equation (\ref{eqn:volume1})  we get,
\begin{equation}\label{eqn:volume2}
\sum_{i=0}^{2k-1}\int_{I_\epsilon} \left(\frac{dx}{x^{2k-i}}\int_Z \alpha_i \wedge\left(\sum_{j=0}^{2k-1}x^j\beta_j\right)^{n-1}\right) +\mathcal{O}(1)
\end{equation}
\noindent where  $ I_\epsilon=(-1, -\epsilon)\cup(\epsilon, 1)$.  Thus,
\begin{equation}\label{eqn:volume2}
\int_{M\setminus \mathcal{U}_\epsilon} \omega^n=\sum_{i=1}^k c_i\epsilon^{-(2i-1)}+\mathcal{O}(1)
\end{equation}
\noindent where the $c_i$ are linear combinations of the integrals
$$\int_Z\alpha_{j_1}\wedge\beta_{j_2}\wedge\dots\wedge \beta_{j_n}$$

\subsection{ The desingularized version} Let us compute the symplectic volume of $M$ with respect to the symplectic form
\begin{equation}\label{eqn:eq2}
{\omega_{\epsilon}}=df_{\epsilon}\wedge (\sum_{i=0}^{2k-1} x^i\alpha_i)+\beta
\end{equation}

Outside the tubular neighborhood, $\mathcal{U}_\epsilon$, ${\omega_{\epsilon}}$ coincides with $\omega,$ so we get, for the integral of

 ${\omega_{\epsilon}}^n$ over the complement of this tube neighborhood, the result described above. What about the integral on the tube
 neighborhood?

Recall that

$$f_{\epsilon}(x)=\epsilon^{-(2k-1)}f\!\left(\frac{x}{\epsilon}\right)$$

\noindent where $f$ is the function defined in (\ref{definingequation}) and thus the integral of ${\omega_{\epsilon}}^n $ over
$\mathcal{U}_{\epsilon}$ is given by

\begin{equation}\label{eqn:eq5}
\int_{\mathcal{U}_{\epsilon}} df_{\epsilon}\wedge (\sum_{i=0}^{2k-1} x^i\alpha_i)\wedge \beta^{n-1},
\end{equation}
\noindent which by equation (\ref{eqn:eq1}) can be rewritten as
\begin{equation}\label{eqn:eq5}
\sum_{i=1}^{2k-1} b_i \int_{-\epsilon}^{\epsilon} \frac{ df_{\epsilon}}{dx} x^i dx
\end{equation}
\noindent plus a bounded error term where the coefficients $b_i$ like the $c_i$ are linear combinations of the integrals
$$\int_Z\alpha_{j_1}\wedge\beta_{j_2}\wedge\dots\wedge \beta_{j_n}.$$

To evaluate the integrals
$$ \int_{-\epsilon}^{\epsilon} \frac{ df_{\epsilon}}{dx} x^i dx $$
\noindent we make the change of coordinates $x=\epsilon y$ which converts the integral above into

\begin{equation}\label{eqn:eq6}
\epsilon^{-(2k-1)+i}\int_{-1}^{1} \frac{ df}{dy}(y) y^i d{y}.
\end{equation}

Therefore since $f(-y)=-f(y)$ this {quantity} is zero for $i$ odd and equal to a positive constant multiple of $\epsilon^{-(2k-1)+i} $ for $i$
even. Thus,
\begin{equation}\label{eqn:eq7}
\int_{\mathcal{U}_{\epsilon}} \omega_{\epsilon}^n=\sum_{i=1}^k a_i \epsilon^{-(2i-1)}
\end{equation}
\noindent where the $a_i$'s like the $b_i$'s and $c_i$'s  are linear combinations of integrals of type
$$\int_Z\alpha_{j_1}\wedge\beta_{j_2}\wedge\dots\wedge \beta_{j_n}.$$

Finally combining equations (\ref{eqn:eq7}) and equation (\ref{eqn:volume2})  this proves\begin{theorem}\label{prop:expand}
The volume determined by the desingularized symplectic form ${\omega_{\epsilon}}$ is given by  $$\int_M {\omega_{\epsilon}}^n=
\sum_{i=1}^k(a_i+c_i) \epsilon^{-(2{i}-1)}+\mathcal{O}(1)$$
\noindent where the coefficients $a_i$'s and $c_i$'s are linear combinations of integrals of type
$\int_Z\alpha_{j_1}\wedge\beta_{j_2}\wedge\dots\wedge \beta_{j_n}.$

\end{theorem}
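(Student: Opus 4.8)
The plan is to exploit additivity of the volume integral over the decomposition $M = (M\setminus\mathcal{U}_\epsilon)\cup \mathcal{U}_\epsilon$, computing the contribution of $\omega_\epsilon^n$ on each piece separately and then summing. On the complement $M\setminus\mathcal{U}_\epsilon$ the desingularized form coincides with $\omega$ by construction, so the first task reduces to evaluating the genuine $b^{2k}$-symplectic volume of the exterior region. Starting from the Laurent decomposition (\ref{00}) and the Taylor expansion (\ref{eqn:eq1}) of $\beta$ in powers of $x$, I would expand the top wedge power $\omega^n$, collect the radial factors $dx/x^{2k-i}$, and integrate over $I_\epsilon=(-1,-\epsilon)\cup(\epsilon,1)$. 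Since $I_\epsilon$ is symmetric under $x\mapsto -x$, integrating $x^{-(2k-i)}$ produces a negative power of $\epsilon$ only when $2k-i$ is even, the odd-parity integrands cancelling; the surviving exponents $\epsilon^{-(2k-1-i)}$, after re-indexing, give precisely the expansion (\ref{eqn:volume2}) with coefficients $c_i$ that are linear combinations of the $\int_Z\alpha_{j_1}\wedge\beta_{j_2}\wedge\cdots\wedge\beta_{j_n}$.

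The second task is the integral over the tube $\mathcal{U}_\epsilon$, where $\omega_\epsilon = df_\epsilon\wedge(\sum_{i=0}^{2k-1}x^i\alpha_i)+\beta$ as in (\ref{eqn:eq2}). Here I would again substitute the expansion (\ref{eqn:eq1}) for $\beta$ into $\omega_\epsilon^n$, reducing the top power to the one-dimensional integrals $\int_{-\epsilon}^{\epsilon}(df_\epsilon/dx)\,x^i\,dx$ appearing in (\ref{eqn:eq5}), with coefficients $b_i$ of the same wedge-integral type. The change of variables $x=\epsilon y$ using the definition (\ref{definingequation}) of $f_\epsilon$ turns each of these into $\epsilon^{-(2k-1)+i}\int_{-1}^{1}f'(y)\,y^i\,dy$, which is (\ref{eqn:eq6}). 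The parity of $f$ is the crucial point: since $f$ is odd, $f'$ is even, so $f'(y)\,y^i$ is an odd function for odd $i$ and its integral over $[-1,1]$ vanishes, while for even $i$ it is a positive constant. Writing $i=2j$ collapses the surviving terms into $\sum_{j=1}^k a_j\epsilon^{-(2j-1)}$, giving (\ref{eqn:eq7}).

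Combining the two expansions (\ref{eqn:eq7}) and (\ref{eqn:volume2}) termwise, matching equal powers of $\epsilon$, immediately produces the claimed formula $\int_M\omega_\epsilon^n=\sum_{i=1}^k(a_i+c_i)\epsilon^{-(2i-1)}+\mathcal{O}(1)$. The main obstacle — and the step requiring the most care — is not the final combination but the bookkeeping that isolates the singular powers from the bounded remainder: one must check that the contribution of $M\setminus\mathcal{U}$, the $dx\wedge\gamma$ piece of $\beta$ in (\ref{eqn:eq1}), and the higher-order $\mathcal{O}(x^{2k})$ tail all feed only into the $\mathcal{O}(1)$ term, so that the genuinely divergent part is captured exactly by the finitely many integrals over $Z$. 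The twin parity cancellations — the symmetry of $I_\epsilon$ on the exterior and the oddness of $f$ on the tube — are what guarantee that both expansions are supported on the same set of exponents $\epsilon^{-(2i-1)}$ and can be added cleanly.
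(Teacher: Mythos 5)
Your proposal follows the paper's own argument essentially verbatim: the same decomposition $M=(M\setminus\mathcal{U}_\epsilon)\cup\mathcal{U}_\epsilon$, the same use of the Laurent decomposition together with the expansion (\ref{eqn:eq1}) of $\beta$ to reduce each piece to one-dimensional radial integrals, the same parity cancellations (symmetry of $I_\epsilon$ outside, oddness of $f$ inside via $x=\epsilon y$), and the same final summation of (\ref{eqn:volume2}) and (\ref{eqn:eq7}). The only difference is that you make explicit the bookkeeping of the bounded terms and the exterior parity argument, which the paper leaves implicit; this is a correct and faithful reconstruction.
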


\subsection{Leading terms} { The results in this section tell us how the symplectic volume grows asymptotically when $\epsilon\to 0$.}  The leading term in the asymptotic expansion given by formula (\ref{eqn:volume2}) is
\begin{equation}\label{eqn:eq10}
\frac{2}{2k-1} \epsilon^{-(2k-1)}\int_Z\alpha_0\wedge\beta_0^{n-1}
\end{equation}
\noindent and the leading term in the asymptotic expansion inside $\mathcal{U}_\epsilon$ is
\begin{equation}\label{eqn:eq11}
4\epsilon^{-(2k-1)}\int_Z\alpha_0\wedge\beta_0^{n-1}
\end{equation}
\noindent so adding equations (\ref{eqn:eq10}) and (\ref{eqn:eq11})
we obtain the following asymptotic result for the symplectic volume of $M$ with respect to
${\omega_\epsilon}:$

\begin{theorem}[Asymptotics for the symplectic volume] { For $(M,\omega)$ a $2n$-dimensional $b^{2k}$-symplectic manifold, the asymptotics of the symplectic volume of the family $(M,\omega_{\epsilon})$ as $\epsilon\to 0$ are:}
$$\int {\omega_\epsilon}^n\sim 2(2+ \frac{1}{2k-1})\epsilon^{-(2k-1)}\int_Z\alpha_0\wedge\beta_0^{n-1}$$
\end{theorem}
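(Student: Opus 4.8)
The plan is to use that $\omega_{\epsilon}$ coincides with $\omega$ off the tube $\mathcal{U}_{\epsilon}$, so that
\[ \int_M \omega_{\epsilon}^n = \int_{M\setminus\mathcal{U}_{\epsilon}}\omega^n + \int_{\mathcal{U}_{\epsilon}}\omega_{\epsilon}^n, \]
and to read off from each summand only the coefficient of the most singular power $\epsilon^{-(2k-1)}$, discarding everything of lower order. By Theorem \ref{prop:expand} both integrals are finite sums $\sum_{i=1}^k(a_i+c_i)\epsilon^{-(2i-1)}+\mathcal{O}(1)$, so the asymptotic statement amounts to the claim that the two top ($i=k$) coefficients combine to $2(2+\frac{1}{2k-1})\int_Z\alpha_0\wedge\beta_0^{n-1}$. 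My first step is therefore to isolate the $i=k$ term in each region.

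For the exterior piece I would return to the expansion (\ref{eqn:volume2}): a pole of order $\epsilon^{-(2k-1)}$ can only arise from the least regular Laurent coefficient, i.e. the $i=0$ summand $\frac{dx}{x^{2k}}\wedge\alpha_0$, paired with the $x$-independent part $\beta_0^{n-1}$ of $\beta^{n-1}$ coming from (\ref{eqn:eq1}); every higher $i$, or higher power of $x$, softens the pole. The radial factor is $\int_{I_{\epsilon}}x^{-2k}\,dx\sim\frac{2}{2k-1}\epsilon^{-(2k-1)}$, the two halves of $I_{\epsilon}$ contributing equally since $x^{-2k}$ is even; this reproduces (\ref{eqn:eq10}). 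For the interior piece I would start from (\ref{eqn:eq7}) and again keep only the $x^0$ contribution, whose coefficient is once more $\int_Z\alpha_0\wedge\beta_0^{n-1}$. After the substitution $x=\epsilon y$ its radial factor is $\epsilon^{-(2k-1)}\int_{-1}^{1}f'(y)\,dy=\epsilon^{-(2k-1)}\bigl(f(1)-f(-1)\bigr)=2f(1)\,\epsilon^{-(2k-1)}$, using that $f$ is odd; this is (\ref{eqn:eq11}). Adding (\ref{eqn:eq10}) and (\ref{eqn:eq11}) yields the leading term.

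Two points need care. First, one should check that the discarded terms really are $o(\epsilon^{-(2k-1)})$: in both finite sums the remaining contributions carry $\epsilon^{-(2i-1)}$ with $i<k$ or are bounded, and the pure $\beta^n$ part of $\omega_{\epsilon}^n$ is smooth across $Z$; moreover the leading term is not vacuous because $\int_Z\alpha_0\wedge\beta_0^{n-1}\neq 0$ by the nonvanishing assertion of Proposition \ref{prop:thetamu}. The genuine obstacle — and the step I would verify most carefully — is the exact numerical constant. It is assembled from two pieces of the very same order $\epsilon^{-(2k-1)}$: the exterior radial integral and the interior jump $2f(1)\,\epsilon^{-(2k-1)}$, where $f(1)$ is determined by the normalization of $f$ outside $[-1,1]$, namely the interplay between the $\pm 2$ shift and the $-\frac{1}{(2k-1)x^{2k-1}}$ tail. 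Because these two contributions are of identical order, the final constant is delicate; this normalization bookkeeping is precisely where sign or scaling slips are most likely, and it is what must be tracked to extract the coefficient $2(2+\frac{1}{2k-1})$.
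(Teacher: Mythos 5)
Your strategy is exactly the paper's: split $\int_M\omega_\epsilon^n$ into the exterior piece and the interior piece, isolate the coefficient of $\epsilon^{-(2k-1)}$ in each, and add; and your exterior computation correctly reproduces (\ref{eqn:eq10}). The gap is that the one step you yourself single out as decisive --- evaluating $f(1)$ --- is never carried out, and the identification you make in its place, namely that $2f(1)\,\epsilon^{-(2k-1)}\int_Z\alpha_0\wedge\beta_0^{n-1}$ \lq\lq is (\ref{eqn:eq11})\rq\rq, is false. Since $f$ is smooth and equals $-\frac{1}{(2k-1)x^{2k-1}}+2$ for $x>1$, continuity gives
$$f(1)=2-\frac{1}{2k-1},\qquad\text{so}\qquad 2f(1)\,\epsilon^{-(2k-1)}=\left(4-\frac{2}{2k-1}\right)\epsilon^{-(2k-1)},$$
whereas (\ref{eqn:eq11}) asserts the interior leading term is $4\,\epsilon^{-(2k-1)}\int_Z\alpha_0\wedge\beta_0^{n-1}$. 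These do not agree, so your proof cannot simply cite (\ref{eqn:eq11}) after computing the jump $2f(1)$.

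Moreover, completing your computation honestly does not produce the stated constant: adding your two radial factors gives $\frac{2}{2k-1}+\left(4-\frac{2}{2k-1}\right)=4$, i.e. $\int_M\omega_\epsilon^n\sim 4\,\epsilon^{-(2k-1)}\int_Z\alpha_0\wedge\beta_0^{n-1}$, not $2\left(2+\frac{1}{2k-1}\right)\epsilon^{-(2k-1)}\int_Z\alpha_0\wedge\beta_0^{n-1}$. A cross-check confirms this: integrating directly over the whole unit tube, the radial factor is $2f_\epsilon(1)=2\epsilon^{-(2k-1)}f(1/\epsilon)=\frac{4}{\epsilon^{2k-1}}-\frac{2}{2k-1}$, since $f(1/\epsilon)\to 2$; more generally, if $f$ is normalized with shift $c$ (i.e. $f(x)=c-\frac{1}{(2k-1)x^{2k-1}}$ for $x>1$), the leading coefficient of the total volume is $2c$, independent of how the divergence is split between the two regions. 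With the paper's normalization $c=2$ this is $4$; the constant $2\left(2+\frac{1}{2k-1}\right)$ would require $c=2+\frac{1}{2k-1}$. So the missing step in your proposal is not a formality: carrying it out shows that (\ref{eqn:eq11}) tacitly uses the limiting value $f(\infty)=2$ where the boundary value $f(1)=2-\frac{1}{2k-1}$ belongs, and that with the paper's own $f$ the asymptotic constant in the statement should be $4$. Your write-up, by deferring the evaluation of $f(1)$ and asserting the bookkeeping will yield the claimed coefficient, papers over exactly this discrepancy.
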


\section{Desingularizing  $b^{2k+1}$-symplectic structures}

Let $M$ be a  $b^{2k+1}$-symplectic manifold. In view of the Laurent decomposition given in Proposition \ref{prop:laurent} in an
$\epsilon$-neighborhood of $Z$ the $b$-symplectic form has the decomposition, in local coordinates

\[ \omega = \frac{dx}{x^{2k+1}}\wedge(\sum_{i = 0}^{2k}  \pi^*(\alpha_{i})x^i) + \beta \]

Let $f\in \mathcal{C}^{\infty}(\mathbb R)$ satisfy

\begin{itemize}
\item $f>0$.
\item $f(x)=f(-x)$.
\item  $f'(x)>0$ if $x<0$.
\item $f(x)=-x^2+2$ if $x\in [-1,1]$.
\item $f(x)=\log(\vert x \vert)$ if $k=0$, $x\in \mathbb R\setminus[-2,2]$.
\item $f(x)=\frac{-1}{(2k+2)x^{2k+2}}$ if $k>0$, $x\in \mathbb R\setminus[-2,2]$.

\end{itemize}
{
as depicted here:}
\begin{figure}[!htb]
\centering
\includegraphics[scale=.55]{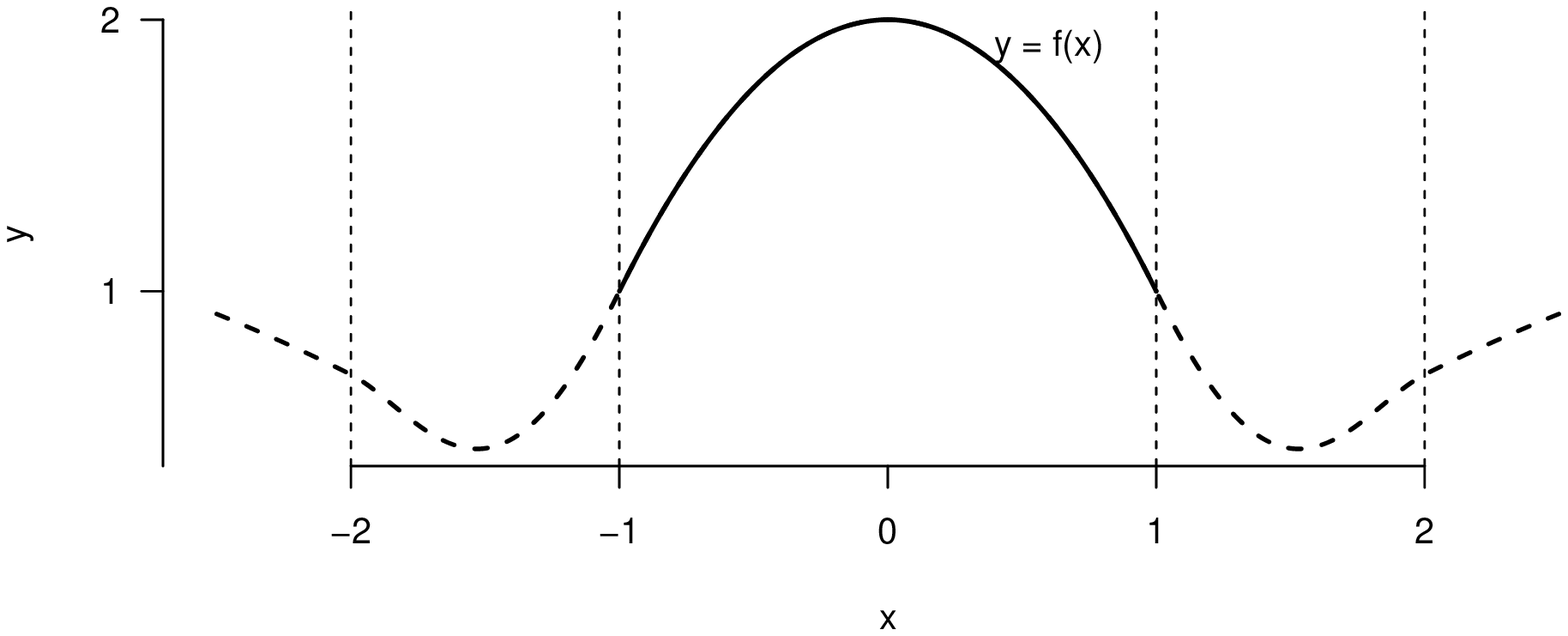}

\label{fig:digraph}
\end{figure}

Now define
\begin{equation}f_\epsilon(x):=  \frac{1}{\epsilon^{2k}} f\!\left(\frac{x}{\epsilon}\right)\end{equation}

\noindent and, as in the even case, let

\begin{equation}\label{eqn:even}
 {\omega_\epsilon} = df_\epsilon\wedge(\sum_{i = 0}^{2k}  \pi^*(\alpha_{i})x^i) + \beta
\end{equation}

We can prove the following

\begin{theorem}\label{thm:desingularizingfolded}The  $2$-form $\omega_{\epsilon}$ is a folded symplectic form which coincides with $\omega$
outside an $\epsilon$-neighborhood of $Z$.

\end{theorem}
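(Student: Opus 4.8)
The plan is to verify the three conditions that constitute folded symplecticity — that $\omega_\epsilon$ is closed, that $\omega_\epsilon^n$ vanishes transversally along a hypersurface, and that its restriction to that hypersurface has maximal rank $2n-2$ — together with the elementary statement that $\omega_\epsilon$ agrees with $\omega$ off the $\epsilon$-tube. Closedness I would dispose of exactly as in the even case: writing $\omega_\epsilon = df_\epsilon\wedge\theta + \beta$ with $\theta = \sum_{i=0}^{2k}\pi^*(\alpha_i)x^i$ and all $\alpha_i$ and $\beta$ closed, one has $d(df_\epsilon\wedge\theta) = -df_\epsilon\wedge d\theta$, and since both $df_\epsilon$ and $d\theta$ are multiples of $dx$ this vanishes, so $d\omega_\epsilon = d\beta = 0$. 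For the coincidence outside the tube I would use that the tails of $f$ are chosen so that $f_\epsilon'(x)=\epsilon^{-(2k+1)}f'(x/\epsilon)$ equals $x^{-(2k+1)}$ for $|x|\ge 2\epsilon$; hence there $df_\epsilon = dx/x^{2k+1}$ and $\omega_\epsilon = \omega$.

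The heart of the argument is the folding computation, which I would route through the $b^{2k+1}$-Darboux theorem (Theorem \ref{theorem:Darbouxbn}): near any $p\in Z$ there are coordinates with $\omega = \frac{dx}{x^{2k+1}}\wedge dy + \sum_{i=2}^n dx_i\wedge dy_i$, so that
\[
\omega_\epsilon = f_\epsilon'(x)\,dx\wedge dy + \sum_{i=2}^n dx_i\wedge dy_i,\qquad
\omega_\epsilon^n = n!\,f_\epsilon'(x)\,dx\wedge dy\wedge dx_2\wedge dy_2\wedge\cdots\wedge dx_n\wedge dy_n .
\]
Because $f$ is even with $f'$ vanishing only at the origin, $\omega_\epsilon^n$ vanishes exactly on $\{x=0\}=Z$; and since $f_\epsilon''(0)=\epsilon^{-(2k+2)}f''(0)\neq 0$ (the function $f$ being a nonzero multiple of $x^2$ plus a constant near $0$), the coefficient $f_\epsilon'(x)$ has a simple zero there, so $\omega_\epsilon^n$ meets the zero section transversally along $Z$. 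Finally $i^*dx=0$ for $i:Z\hookrightarrow M$, so $i^*\omega_\epsilon=\sum_{i=2}^n dx_i\wedge dy_i$ has rank $2n-2$, maximal on the $(2n-1)$-dimensional $Z$. As these conditions are coordinate-free, checking them in a Darboux chart about each point of $Z$ proves $\omega_\epsilon$ is folded symplectic with fold $Z$.

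The point I expect to be the main obstacle — and the reason I use the Darboux normal form rather than an arbitrary Laurent splitting — is to guarantee that the fold is \emph{exactly} $Z$. Expanded directly, $\omega_\epsilon^n = \beta^n + n\,f_\epsilon'(x)\,dx\wedge\theta\wedge\beta^{n-1}$, and at $x=0$ the second term dies because $f_\epsilon'(0)=0$; so the fold lands on $Z$ only if $\beta^n$ also vanishes there, which is not automatic for a general decomposition. This is repaired by choosing $\beta$ with no $dx$-component (equivalently $\iota_{\partial/\partial x}\beta=0$), always possible via the freedom $\theta\mapsto\theta+x^{2k+1}\eta,\ \beta\mapsto\beta-dx\wedge\eta$, and precisely what the Darboux coordinates realize; such a $\beta$ is pulled back from $Z$ fibrewise, whence $\beta^n=0$ near $Z$, while $dx\wedge\theta\wedge\beta^{n-1}\neq 0$ along $Z$ by Proposition \ref{prop:thetamu}. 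This is exactly the feature separating the odd case from the even one: there $f$ is odd with $f'(0)\neq 0$, so $\omega_\epsilon^n$ never vanishes and one obtains a genuine symplectic form, whereas here $f$ is even with a simple critical point at $0$, forcing $f_\epsilon'$ to change sign across $Z$ and producing the fold. The only genuine checks are then that $f'$ has a single simple zero at $0$ while matching $x^{-(2k+1)}$ outside, and that $\beta$ is taken horizontal; the rest is a short computation.
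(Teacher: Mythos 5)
Your reduction of the statement to four checks (closedness, transversal vanishing of $\omega_\epsilon^n$, maximal rank on the fold, agreement outside the tube) is correct, and your closedness and matching arguments agree with the paper's. But the central step has a genuine gap: you assert that in $b^{2k+1}$-Darboux coordinates one has $\omega_\epsilon = f_\epsilon'(x)\,dx\wedge dy+\sum_{i\ge2}dx_i\wedge dy_i$. That identification is unjustified, and in general false. The form $\omega_\epsilon$ is defined once and for all by substituting $df_\epsilon$ into one fixed, globally chosen Laurent splitting $\omega=\frac{dx}{x^{2k+1}}\wedge\theta+\beta$ on a tubular neighborhood of $Z$; this construction is not natural, i.e.\ it does not commute with $b^{2k+1}$-symplectomorphisms or with changes of splitting. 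Pulling $\omega_\epsilon$ back to a Darboux chart produces the original $\theta$ and $\beta$ written in new coordinates, not $\theta=dy$ and $\beta=\sum_{i\ge2}dx_i\wedge dy_i$. So your computation proves that a \emph{different}, locally defined $2$-form (the desingularization of the normal form) is folded; it says nothing about the $\omega_\epsilon$ of the theorem. Your own ``main obstacle'' paragraph makes this unavoidable: as you observe, replacing $\beta$ by $\beta+dx\wedge\eta$ and $\theta$ by $\theta-x^{2k+1}\eta$ leaves $\omega$ unchanged but moves the degeneracy locus of the resulting desingularized form off $Z$; hence desingularizations of different splittings are genuinely different forms, and the chart-by-chart Darboux desingularizations neither equal $\omega_\epsilon$ nor patch together into any global form.

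The proposed repair does not close the gap. If $\eta=\iota_{\partial/\partial x}\beta$ and you pass to the splitting $\tilde\theta=\theta+x^{2k+1}\eta$, $\tilde\beta=\beta-dx\wedge\eta$, the pieces are no longer closed, and a direct computation gives
\begin{equation*}
d\bigl(df_\epsilon\wedge\tilde\theta+\tilde\beta\bigr)=\bigl(1-x^{2k+1}f_\epsilon'(x)\bigr)\,dx\wedge d\eta ,
\end{equation*}
which vanishes inside the tube only if $dx\wedge d\eta=0$ --- not automatic. So after the repair, foldedness is checked for a form that need not be closed, while closedness was proved for the unrepaired form: no single $2$-form has been shown to have both properties. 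The paper, by contrast, never leaves the global Laurent splitting: it expands $\omega_\epsilon^n=\epsilon^{-(2k+1)}\bigl(f'(x/\epsilon)\,dx\wedge\alpha_0\wedge\beta^{n-1}+\mathcal{O}(\epsilon)\bigr)$, gets transversality from $dx\wedge\alpha_0\wedge\beta^{n-1}\neq0$ (Proposition \ref{prop:thetamu}) together with the linear vanishing $f'(t)=-2t$, and gets maximal rank along $Z$ from $(i^*\omega_\epsilon)^{n-1}=\beta^{n-1}\neq0$ on the symplectic leaves. To your credit, you have identified the weak point of that argument as well --- at points of $Z$ one literally has $\omega_\epsilon^n=\beta^n$, a term the paper's $\mathcal{O}(\epsilon)$ bookkeeping obscures, so for a general Laurent splitting the fold need not be exactly $Z$. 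But the clean way to exploit your observation is a \emph{semi-global} normal form on a whole neighborhood of $Z$, obtained from the $b^m$-Moser theorem (Theorem \ref{theorem:moserbn}), in which $\beta$ is a pullback from $Z$ (hence $\beta^n\equiv0$); one then desingularizes that single model and glues to $\omega$ along the tube boundary. Pointwise Darboux charts cannot perform this job.
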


\begin{proof}
By definition of the function $f_\epsilon$, $\omega_{\epsilon}$ coincides with $\omega$ outside an $\epsilon$-neighborhood  of the critical
hypersurface $Z$. As in the proof of Theorem  \ref{thm:desingularizingsymp}, it is easy to see that $\omega_\epsilon$ is symplectic away from
$Z.$ In order to check that $\omega_{\epsilon}$ is a folded symplectic structure, we need to check that ${\omega_{\epsilon}}^n$ is transverse to
the zero section of the bundle $\Lambda^n(T^*M)$ {and also that $(i^*{\omega_{\epsilon}})^{n-1}$ is non-vanishing where $i:Z\hookrightarrow M$ denotes the inclusion.}

Let us denote by $\alpha_i=\pi^*(\widehat{\alpha_i})$ with $\widehat{\alpha_i}$ one-forms on $Z$. Since
$${\omega_\epsilon}= df_{\epsilon}\wedge (\sum_{i=0}^{2k} x^i\alpha_i)+\beta=\epsilon^{-(2k+1)} {f^{\prime}}\!\left(\frac{x}{\epsilon}\right) dx \wedge
(\sum_{i=0}^{2k} x^i \alpha_i)+\beta,$$
{
which on the interval
 $\vert x\vert <\epsilon$ is equal to}
 {$\epsilon^{-(2k+1)}( f^{\prime}\!\left(\frac{x}{\epsilon}\right) dx \wedge \alpha_0  +\mathcal{O}(\epsilon))+ \beta$}
\noindent and hence  {$${\omega_\epsilon}^n=\epsilon^{-(2k+1)}(f^{\prime}\!\left(\frac{x}{\epsilon}\right) dx \wedge
\alpha_0\wedge\beta^{n-1}+\mathcal{O}(\epsilon))$$}{ Now observe that  for $\epsilon$ sufficiently small ${\omega_{\epsilon}}^n$ is transverse to the zero section of  $\Lambda^n(T^*M)$ because on one hand $dx\wedge\alpha_0\wedge \beta^{n-1}\neq 0$  (by Proposition \ref{prop:thetamu}) and on the other,  by definition of $f$, $f(x)=-x^2+2$ so $f^{\prime}(x)=-2x$ vanishes linearly at $x=0$.} { Observe too that $(i^*{\omega_{\epsilon}})^{n-1}$ is non-vanishing because $\beta$ induces a symplectic structure on each of the symplectic leaves of the original $b^{2k+1}$-symplectic structure $\omega$, and therefore $\beta^{n-1}\neq 0$. Thus ${\omega_\epsilon}$ is a folded symplectic structure.}

%
%
%
%
%

\end{proof}
As a consequence of this fact we obtain the following theorem which generalizes  some of the results contained in Section 3 in
\cite{frejlichmartinezmiranda} for $b$-symplectic manifolds\footnote{{ In \cite{frejlichmartinezmiranda} it was already noticed that $b$-symplectic manifolds share many properties with folded symplectic manifolds in particular in Theorem A in \cite{frejlichmartinezmiranda} it was proved that an orientable open manifold $M$ is $b$-symplectic
if and only if $M\times \mathbb C$ is almost-complex. On the other hand observe that the existence of an almost-complex structure on $M\times \mathbb C$ is equivalent to the existence of a folded symplectic form as proved in \cite{cannas}. }}:

\begin{theorem}
A manifold admitting a  $b^{2k+1}$-symplectic structure also admits a folded symplectic structure.
\end{theorem}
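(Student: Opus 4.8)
The plan is to read off this statement as the global consequence of Theorem~\ref{thm:desingularizingfolded}, which already supplies all the analytic content; the work remaining is only to repackage that construction as an existence result. First I would take an arbitrary $b^{2k+1}$-symplectic form $\omega$ on $M$, with critical hypersurface $Z$ and the global defining function $x$ that is part of the data of the $b^m$-manifold $(M,Z,x)$. Using the Laurent decomposition of Proposition~\ref{prop:laurent} on a tubular neighborhood $U$ of $Z$, I would form the $f_\epsilon$-desingularization $\omega_\epsilon$ as in (\ref{eqn:even}), with $f$ the even profile function fixed just above.

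Next I would check that $\omega_\epsilon$ is a globally defined smooth $2$-form on $M$. Inside $U$ it is given by that formula, while outside the $\epsilon$-neighborhood of $Z$ it agrees with $\omega$, which is an honest smooth symplectic form there because $x$ is nonvanishing away from $Z$ and hence $\frac{dx}{x^{2k+1}}$ is smooth. The two prescriptions match on the overlap by construction of $f_\epsilon$, so they patch to a single form defined on all of $M$.

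The substance is then supplied verbatim by Theorem~\ref{thm:desingularizingfolded}: for $\epsilon$ sufficiently small, $\omega_\epsilon$ is closed, its top power $\omega_\epsilon^n$ meets the zero section of $\Lambda^{2n}(T^*M)$ transversally, and $(i^*\omega_\epsilon)^{n-1}$ is nowhere zero on $Z$, where $i:Z\hookrightarrow M$ is the inclusion. These are precisely the defining properties of a folded symplectic form with fold locus $Z$, so $\omega_\epsilon$ exhibits $M$ as a folded symplectic manifold, which is the assertion.

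The only delicate point — and the main obstacle were one to argue directly rather than quote Theorem~\ref{thm:desingularizingfolded} — is the simultaneous control of the degeneration of $\omega_\epsilon^n$ along $Z$ and of the rank of its restriction. Transversality to the zero section forces the profile $f$ to satisfy $f'(0)=0$ with $f''(0)\neq 0$ (whence the choice $f(x)=-x^2+2$ near the origin), so that $f'(x/\epsilon)$ vanishes linearly at $Z$; the non-vanishing of $dx\wedge\alpha_0\wedge\beta^{n-1}$ and of $\beta^{n-1}$ on the symplectic leaves, both guaranteed by Proposition~\ref{prop:thetamu}, then yield the transversality and the maximal-rank conditions respectively. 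Since these hold for every sufficiently small $\epsilon$, the resulting folded structure is insensitive to the auxiliary choices of $x$, of the tubular neighborhood, and of $f$.
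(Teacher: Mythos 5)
Your proposal is correct and follows exactly the paper's route: the paper states this theorem as an immediate consequence of Theorem~\ref{thm:desingularizingfolded}, and your argument is precisely that deduction (global definedness of the $f_\epsilon$-desingularization plus the fold conditions supplied by that theorem), with your closing remarks merely recapitulating the paper's own proof of Theorem~\ref{thm:desingularizingfolded} (the linear vanishing of $f'$ at $0$ and the non-vanishing of $dx\wedge\alpha_0\wedge\beta^{n-1}$ and $\beta^{n-1}$ via Proposition~\ref{prop:thetamu}). The only trivial quibble is that closedness of $\omega_\epsilon$ holds for every $\epsilon$, not just sufficiently small $\epsilon$; smallness is needed only for the nondegeneracy and transversality conditions.
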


\section{ Group actions and desingularization}

  We conclude by briefly mentioning some applications of desingularization which we propose  to explore in detail in a sequel to this paper.

In the papers \cite{gmps} and \cite{gmps2} it was shown that two classical theorems in equivariant symplectic geometry, the Delzant theorem and
the Atiyah-Guillemin-Sternberg convexity theorem, have analogs for $b$-symplectic manifolds. We will show that  these theorems also have analogs
for for $b^m$-manifolds (except for the assertion in Delzant's theorem that \lq\lq the moment image of $M$ determines $M$ up to
symplectomorphism").

In addition we will use the desingularization procedure to prove  $b^m$-versions of the Kirwan convexity theorem  and of the Duistermaat-Heckman
theorem (concerning the latter the main ingredient in our proof will be the observation that in the vicinity of the critical hypersurface $Z$,
the desingularized Duistermaat-Heckman measure can be easily computed and its behavior as $\epsilon$ tends to zero easily described using
Theorems \ref{thm:desingularizingsymp} and \ref{thm:desingularizingfolded}.)

Finally we note that the complexities that are required  to keep track of the \lq\lq infinities" occurring in the $b^m$-versions of the theorems
above can largely be avoided by viewing these infinities as coming from the desingularization process as $\epsilon$ tends to zero.

\subsection{ A convexity result for $b^m$-symplectic manifolds} In what follows we will assume for simplicity that the hypersurface  $Z$ is
connected; this assumption can readily be removed.
{As done in} \cite{gmps}  for $b$-symplectic manifolds, we can define Hamiltonian actions in the $b^m$-setting.

\begin{definition} An action of a torus $G= \mathbb{T}^n$ on the $b^m$-symplectic manifold $(M,\omega)$ is called \textbf{Hamiltonian} if it
preserves $\omega$ and $\iota_{X^\#}\omega$ is $b^m$-exact for  any $X \in\mathfrak{g}$.
\end{definition}

Given such a Hamiltonian  action on a $b^m$-manifold $M$,  this action is also Hamiltonian with respect to the desingularized forms. Hence if
$m$ is even, we obtain a family of symplectic forms and a family of Hamiltonian actions on the pairs $(M, \omega_{\epsilon})$. Observe in this
case the desingularized forms are symplectic and
we can invoke the Atiyah-Guillemin-Sternberg convexity theorem for the moment map (\cite{atiyah}, \cite{guilleminsternberg}).

Let us denote by $F_{\epsilon}$ the associated family of moment maps. Then the images of these moment maps are convex polytopes.  To describe
those polytopes, there are two cases to consider\footnote{By analogs of the results in \cite{gmps},  these are the only two cases that occur,
even when the number of connected components of $Z$ is greater than $1$.}:

\paragraph{Case 1} \emph{ The image of the moment map coincides with the image of the moment map induced in the symplectic foliation in the
critical set }\footnote{In this case all the connected components for the initial action have zero modular weight. Cf. Section 6 in \cite{gmps}.}. In this
case the moment polytope for $M$  coincides with the image of the moment map on one of the symplectic leaves in $Z$.

 \paragraph{Case 2} \emph{ The function $f_\epsilon$ is one of the components of the moment map}\footnote{ In this case, all the connected
components for the initial action have non-zero modular weight. Cf. Section 6 in  \cite{gmps}.}. These polytopes are as depicted in the picture on the left below {where we use the explicit expression of $f_{\epsilon}$}:

\definecolor{qqqqff}{rgb}{0.,0.,1.}
\definecolor{zzttqq}{rgb}{0.6,0.2,0.}
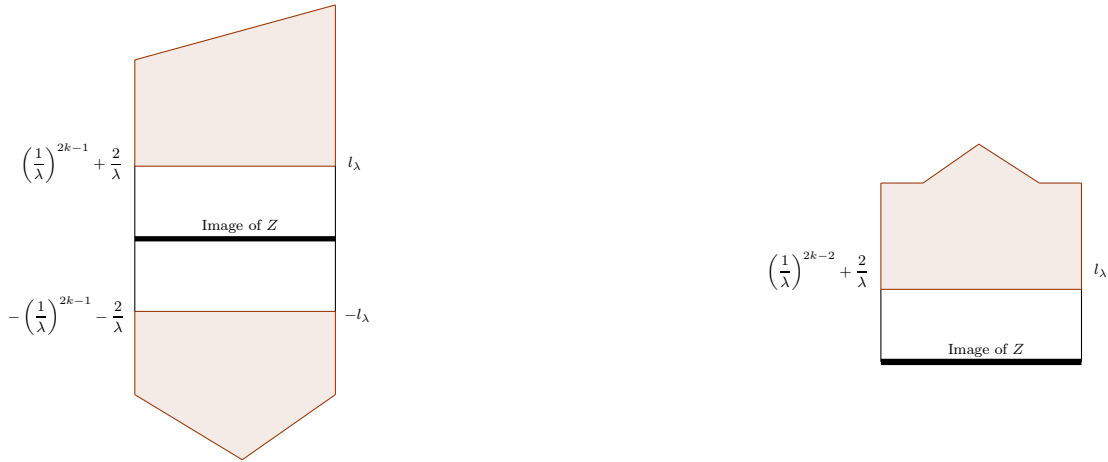
\begin{figure}[!h]\centering\begin{tikzpicture}[scale=0.7, transform shape][line cap=round,line join=round,>=triangle 45,x=1.0cm,y=1.0cm]\clip(-4.3,-5.) rectangle (10.02,6.3);
\fill[color=zzttqq,fill=zzttqq,fill opacity=0.10000000149011612] (-1.,4.26) -- (-1.,2.24) -- (2.809649122807016,2.24) -- (2.809649122807016,5.305438596491227) -- cycle;\fill[color=zzttqq,fill=zzttqq,fill opacity=0.10000000149011612] (-1.,-0.52) -- (-1.,-2.101228070175437) -- (1.04,-3.34) -- (2.809649122807016,-2.1012280701754373) -- (2.809649122807016,-0.52) -- cycle;
\draw [color=zzttqq] (-1.,4.26)-- (-1.,2.24);
\draw [color=zzttqq] (-1.,2.24)-- (2.809649122807016,2.24);
\draw [color=zzttqq] (2.809649122807016,2.24)-- (2.809649122807016,5.305438596491227);
\draw [color=zzttqq] (2.809649122807016,5.305438596491227)-- (-1.,4.26);
\draw [color=zzttqq] (-1.,-0.52)-- (-1.,-2.101228070175437);
\draw [color=zzttqq] (-1.,-2.101228070175437)-- (1.04,-3.34);
\draw [color=zzttqq] (1.04,-3.34)-- (2.809649122807016,-2.1012280701754373);
\draw [color=zzttqq] (2.809649122807016,-2.1012280701754373)-- (2.809649122807016,-0.52);
\draw [color=zzttqq] (2.809649122807016,-0.52)-- (-1.,-0.52);
\draw [line width=2.0pt] (-1.,0.86)-- (2.809649122807016,0.86);
\draw (-1.,2.24)-- (-1.,-0.52);
\draw (2.809649122807016,2.24)-- (2.809649122807016,-0.52);\begin{scriptsize}\draw[color=black] (-2.2,2.3) node {$\displaystyle \left(\frac{1}{\lambda}\right)^{2k - 1} + \frac{2}{\lambda}$};
\draw[color=black] (3.18,2.3) node {$l_\lambda$};
\draw[color=black] (-2.3,-0.6) node {$-\displaystyle \left(\frac{1}{\lambda}\right)^{2k - 1} - \frac{2}{\lambda}$};
\draw[color=black] (3.23,-0.6) node {$-l_\lambda$};
\draw[color=black] (1,1.09) node {Image of $Z$};
\end{scriptsize}\end{tikzpicture}\begin{tikzpicture}[scale=0.7, transform shape][line cap=round,line join=round,>=triangle 45,x=1.0cm,y=1.0cm]\clip(-4.151110350871816,-2.657924867316483) rectangle (8.756349599086468,7.5274310987818565);
\fill[color=zzttqq,fill=zzttqq,fill opacity=0.10000000149011612] (-1.,2.24) -- (-1.,4.26) -- (-0.2,4.26) -- (0.8604453280226013,5) -- (2.009649122807016,4.26) -- (2.809649122807016,4.26) -- (2.809649122807016,2.24) -- cycle;
\draw [line width=2.4pt] (-1.,0.86)-- (2.809649122807016,0.86);
\draw (-1.,0.86)-- (-1.,2.24);
\draw (2.8096491228070164,0.86)-- (2.809649122807016,2.24);
\draw [color=zzttqq] (-1.,2.24)-- (-1.,4.26);
\draw [color=zzttqq] (-1.,4.26)-- (-0.2,4.26);
\draw [color=zzttqq] (-0.2,4.26)-- (0.8604453280226013,5);
\draw [color=zzttqq] (0.8604453280226013,5)-- (2.009649122807016,4.26);
\draw [color=zzttqq] (2.009649122807016,4.26)-- (2.809649122807016,4.26);
\draw [color=zzttqq] (2.809649122807016,4.26)-- (2.809649122807016,2.24);
\draw [color=zzttqq] (2.809649122807016,2.24)-- (-1.,2.24);
\begin{scriptsize}\draw[color=black] (-2.2,2.6) node {$\displaystyle \left(\frac{1}{\lambda}\right)^{2k -2} + \frac{2}{\lambda}$};
\draw[color=black] (3.18,2.6) node {$l_\lambda$};
\draw[color=black] (1,1.09) node {Image of $Z$};
\end{scriptsize}\end{tikzpicture}\caption{ On the left moment map image of desingularization, even case. In the figure the region above $l_{\lambda}$  and the region below $-l_\lambda$ are independent of $\epsilon$ for $\epsilon <\lambda$. On the right moment map image of desingularization, odd case.}\end{figure}
Finally for $b^{2k+1}$-symplectic manifolds the desingularization gives us folded symplectic manifolds and for these the moment polytopes are
the folded versions of the polytopes above,  as depicted in the figure above on the right.

  We will give a more detailed and rigorous account of these results in a future paper.
 Similarly we will prove analogues of the Duistermaat-Heckman theorems and the Delzant theorem using these methods.

\end{document}